\numberwithin{equation}{section}
\theoremstyle{plain}
\newtheorem{theorem}[equation]{Theorem}
\newtheorem{lemma}[equation]{Lemma}
\newtheorem{proposition}[equation]{Proposition}
\theoremstyle{definition}
\newtheorem{definition}[equation]{Definition}
\newtheorem{observation}[equation]{Observation}
\theoremstyle{remark}
\newtheorem{remark}[equation]{Remark}
\newtheorem{example}[equation]{Example}
\newcommand{\co}{\colon}
\newcommand*{\defeq}{\mathrel{\vcentcolon=}}
\newcommand{\R}{\mathbb R}
\newcommand{\C}{\mathbb C}
\newcommand*{\base}[1][H]{{#1}^{(0)}}
\newcommand{\nb}{\nobreakdash} 
\newcommand{\7}{\backslash}
\newcommand{\inverse}{^{-1}}
\newcommand{\supp}{\textup{supp}}
\newcommand{\HH}{\mathrm H}
\newcommand{\CC}{\mathrm C}
\newcommand{\homeo}{\approx}
\newcommand{\iso}{\simeq}
\newcommand{\Cont}{\mathrm{C}}
\newcommand{\Contc}{\mathrm{C_c}}
\newcommand{\inpro}[2]{\left\langle#1 \mathbin, #2 \right\rangle}
\newcommand*{\Star}{*\nb-}
\newcommand*{\Bound}{\mathbb B}
\newcommand*{\e}{\mathrm e}
\newcommand*{\dd}{\textup{d}}
\newcommand*{\Id}{\textup{Id}}
\newcommand*{\Cst}{\textup C^*}
\newcommand*{\Hils}[1][H]{\mathcal{#1}}
\newcommand{\Ltwo}{\mathcal L^2}
\title{Composition of topological correspondences}
\author{Rohit Dilip Holkar}
\email{rohit.d.holkar@gmail.com}
\address{Department of Mathematics, Federal University of Santa Catarina, 88.\,040-900, Florianop\'olis, SC, Brazil}
\thanks{}
\begin{document}

\maketitle{}

\begin{abstract}
In~\cite{Holkar2015Construction-of-Corr}, we define a topological correspondence from a locally compact groupoid equipped with a Haar system to another one. In~\cite{Holkar2015Construction-of-Corr}, we show that a topological correspondence, $(X,\lambda)$, from a locally compact groupoid with a Haar system $(G,\alpha)$ to another one, $(H,\beta)$, produces a $\Cst$\nb-correspondence $\Hils(X)$ from $\Cst(G,\alpha)$ to $\Cst(H,\beta)$. In the present article, we describe how to form a composite of two topological correspondences when the bispaces are Hausdorff and second countable in addition to being locally compact.
\end{abstract}

\tableofcontents{}

 \section*{Introduction}

Let $(G,\alpha)$ and $(H,\beta)$ be locally compact groupoids with Haar systems. A topological correspondence from $(G,\alpha)$ to $(H,\beta)$ is a $G$-$H$\nb-bispace $X$ which is equipped with a continuous family of measures $\lambda$ along the momentum map $s_X\colon X\to \base$, and the action of $H$ and the family of measures satisfy certain conditions (See~\cite{Holkar2015Construction-of-Corr}*{Definition 2.1}). We need that the action of $H$ is proper, and the condition on $\lambda$ is that it is $H$\nb-invariant and each measure in $\lambda$ is $(G,\alpha)$\nb-quasi-invariant. The groupoids $G$ and $H$, and the space $X$ are locally compact. Recall the definitions from~\cite{Holkar2015Construction-of-Corr}: we call a subset $A\subseteq X$ of a topological space $X$ \emph{quasi-compact} if every open cover of $A$ has a finite subcover, and $A$ is called \emph{compact} if it is quasi-compact and Hausdorff. The space $X$ is called locally compact if every point $x\in X$ has a locally compact neighbourhood. We call a topological groupoid $G$ locally compact if $G$ is a locally compact topological space and $\base[G]\subseteq G$ is Hausdorff.

The main result in~\cite{Holkar2015Construction-of-Corr} says that a topological correspondence $(X,\lambda)$ from $(G,\alpha)$ to $(H,\beta)$ produces a $\Cst$\nb-correspondence $\Hils(X)$ from $\Cst(G,\alpha)$ to $\Cst(H,\beta)$. Section 3 of~\cite{Holkar2015Construction-of-Corr} discusses many examples of topological correspondences.

Two $\Cst$\nb-correspondences, $\Hils[K]\colon A\to B$ and $\Hils[F]\colon B\to C$, may be composed to get a correspondence $\Hils[K]\mathbin{\hat\otimes}_{B}\Hils[F]\colon A\to C$. On the similar lines, consider two topological correspondences $(X,\alpha)$ and $(Y,\beta)$ from $(G_1,\chi_1)$ to $(G_2,\chi_2)$ and $(G_2,\chi_2)$ to $(G_3,\chi_3)$, respectively. We describe the composite $(Y,\beta)\circ(X,\alpha)\colon(G_1,\chi_1)\to(G_3,\chi_3)$ when $X$ and $Y$ are Hausdorff and second countable in addition to being locally compact. In fact, our construction works when $X$ and and $Y$ are Hausdorff and the space $(X\times_{s_X,\base[G_2],r_Y} Y)/G_2$ is paracompact; here $s_X$ and $r_X$ denote the momentum maps for the actions of $G_2$ on $X$ and $Y$, respectively. And the quotient is taken for the diagonal action of $G_2$ on $X\times_{s_X,\base[G_2],r_Y} Y$.

The composite $(Y,\beta)\circ(X,\alpha)$ should be a pair $(\Omega,\mu)$ where $\Omega$ is a $G_1$-$G_3$\nb-bispace, $\mu$ is a continuous family of measures along the momentum map $s_\Omega\colon \Omega\to\base[G_3]$ and the conditions in~\cite{Holkar2015Construction-of-Corr}*{Definition 2.1} are satisfied. Furthermore, we must have an isomorphism
$\Hils(\Omega)\iso\Hils(X)\hat\otimes_{\Cst(G_2,\chi_2)}\Hils(Y)$ of $\Cst$\nb-correspondences. 

The construction of $\Omega$ is well-known--- it is the quotient space $(X\times_{s_X,\base[G_2],r_Y} Y)/G_2$ for diagonal action of $G_2$ on $X\times_{s_X,\base[G_2],r_Y} Y$. The diagonal action is proper, since the action of $G_2$ on $X$ is proper. Thus the quotient space inherits all the \emph{nice} properties of the fibre product such as Hausdorffness. The harder task is to get the continuous family of measures $\mu$ satisfying the required conditions. 

We need that $\mu\defeq\{\mu_u\}_{u\in\base[G_3]}$ is $G_3$\nb-invariant and each $\mu_u$ is $(G_1,\chi_1)$\nb-quasi-invariant. We explain how to get \emph{one} such family of measures. The reason to write `one such family of measures' is that the family is not unique; it depends on the choice of a certain continuous function on $X\times_{s_X,\base[G_2],r_Y} Y$. However, for any two such families of measures the corresponding $\Cst$\nb-correspondences are naturally isomorphic to $\Hils(X)\mathbin{\hat\otimes}_{\Cst(G_2,\chi_2)}\Hils(Y)$. 

The construction of $\mu$ is one of the most technical part of this article. To explain the problem, motivation and idea of constructing the composite of families of measures, we have to do a computation and discuss some technical ideas. Denote the space $X\times_{s_X,\base[G_2],r_Y} Y$ by $Z$. Then $Z$ carries a $G_3$\nb-invariant continuous family of measures $m\defeq \{m_u\}_{u \in {\base[G]}_3}$ which is given by
\begin{equation}
  \label{eq:prel-1}
\int_{Z} f \;\dd m_u = \int_Y\int_X f(x,y) \; \dd\alpha_{r_Y(y)}(x)\; \dd\beta_{u}(y)  
\end{equation}
for $f\in \Contc(Z)$. Let $\pi\colon Z\to\Omega$ be the quotient map and $\lambda$ be the continuous family of measures along it defined as
\begin{equation}
  \label{eq:prel-2}
  \int_{\pi\inverse([x,y])} f\; \dd\lambda^{[x,y]} \defeq \int_{G_2^{r_Y(y)}} f(x\gamma, \gamma\inverse y) \; \dd\chi_2^{r_Y(y)}(\gamma).
\end{equation}
for $f \in \Contc(Z)$, and $[x,y]\in\Omega$ which is the equivalence class of $(x,y)\in Z$. A very natural choice for $\mu$ is that it is the family of measures on $\Omega$ which gives the disintegration $m=\mu\circ\lambda$. Furthermore, one may expect that the isomorphism $\Hils(X)\hat\otimes_{\Cst(G_2,\chi_2)}\Hils(Y)\iso\Hils(\Omega)$ is induced by the map $\Psi\colon \Contc(Z)\to \Contc(\Omega)$ where $\Psi(F)([x,y])= \int_{G_2}F(x\gamma,\gamma\inverse y)\,\dd\chi^{s_X(x)}_2(\gamma)$. To be more explicit, we view $\Contc(Z)$ and $\Contc(\Omega)$ as pre\nb-Hilbert $\Cst(G_3,\chi_3)$\nb-modules which complete to the Hilbert $\Cst(G_3,\chi_3)$\nb-modules $\Hils(X)\hat\otimes\Hils(Y)$ and $\Hils(\Omega)$. And the map $\Psi\colon \Contc(Z)\to\Cont(\Omega)$ is expected to induce the required isomorphism of Hilbert $\Cst(G_3,\chi_3)$\nb-modules which also gives the desired isomorphism of $\Cst$\nb-correspondences.

However, that is not \emph{exactly} the case. Consider the following example: let $G$ be a group and $H$ a closed proper subgroups of $G$. Let $\alpha$ and $\kappa$ be the Haar measures on $G$ and $H$, respectively. Then $(G,\alpha\inverse)$ is a topological correspondence from $(G,\alpha)$ to $(H,\kappa)$ which is called the \emph{induction correspondence} in~\cite{Holkar2015Construction-of-Corr}*{Example 3.13}. The constant function 1 is the adjoining function for this correspondence. 

Let $X$ be a left $H$\nb-space carrying an $(H,\kappa)$\nb-quasi-invariant measure $\beta$. Let $\Delta_X$ denote the 1\nb-cocycle on the transformation groupoid $H\ltimes X$ that gives the quasi-invariance. Assume that $\Delta_X$ is continuous. Then $(X,\beta)$ is a topological correspondence from $(H,\kappa)$ to the trivial group \textup{Pt}, see~\cite{Holkar2015Construction-of-Corr}*{Example 3.6}. The adjoining function of this correspondence is $\Delta_X$. Furthermore, $\Hils(X)=\Ltwo(X,\beta)$ and the action of $H$ induces the representation of $\Cst(H)$ on $\Ltwo(X,\beta)$. Thus we have 
\[
(G,\alpha) \xrightarrow{(G,\alpha\inverse)} (K,\kappa) \xrightarrow{(X,\beta)} \textup{Pt}.
\]
Let $Z,\Omega,\pi, m,\lambda$ and $\mu$ have the similar meaning as in the above discussion. Then in this situation, $Z=G\times X,\, \Omega=(G\times X)/K,\, m=\alpha\inverse \times \beta$ and $\pi\colon G\times X\to (G\times X)/K$ the quotient map. For $f\in \Contc(Z)$ Equation~\eqref{eq:prel-2} now reads
\[
\int_{\pi\inverse([\gamma,x])} f\; \dd\lambda^{[\gamma,x]}=\int_H f(\gamma \eta,\eta\inverse x)\,\dd\kappa(\eta).
\]
What are the necessary and sufficient conditions get a measure $\mu$ on $(G\times X)/K$ satisfying $\alpha\inverse \times \beta=m=\mu\circ\lambda$?

 We may draw a square as in Figure~\ref{fig:pushig-measure-down} comprising of the spaces, maps and measures discussed above. And then (i) of Proposition~\ref{prop:pushing_measure} implies that if there is such a measures $\mu$, then the equality $m\circ\kappa=m\circ\kappa\inverse$ must hold--- this is the necessary condition. Recall from the discussion above that $m=\alpha\inverse\times\beta$. Thus we must have $(\alpha\inverse\times \beta)\circ \kappa=(\alpha\inverse\times \beta)\circ \kappa\inverse$ on $Z\times K=G\times X\times K$. Let $f\in \Contc(G\times X\times K)$, then a direct computation gives that 
\[
(\alpha\inverse\times \beta)\circ \kappa (f)=\int_G\int_X\int_K f(\gamma,x,\eta)\,\dd\kappa(\eta)\,\dd\beta(x)\,\dd\alpha\inverse(\gamma).
\]
On the other hand,
\[
(\alpha\inverse\times \beta)\circ \kappa\inverse (f)=\int_G\int_X\int_K f(\gamma,x,\eta\inverse)\,\dd\kappa(\eta)\,\dd\beta(x)\,\dd\alpha\inverse(\gamma).
\]
Now (i) first apply Fubini's theorem to $\dd\kappa\,\dd\beta$, (ii) then change the variable $(\gamma,x,\eta\inverse)$ to $(\gamma\eta, \eta\inverse x,\eta)$, (iii) then use the $(H,\kappa)$\nb-quasi\nb-invariance of $\beta$ and the right invariance of $\alpha\inverse$ and (iv) finally apply Fubini's theorem to $\dd\beta\,\dd\kappa$  
to see that the last term equals
 \[
\int_G\int_X\int_K f(\gamma,\eta\inverse x,\eta)\,\Delta_X(\eta,\eta\inverse x)\,\dd\kappa(\eta)\,\dd\beta(x)\,\dd\alpha\inverse(\gamma).
\]
Thus the 1\nb-cocycle $\Delta_X$ on the transformation groupoid $H\ltimes X$ is the obstruction for the measures $(\alpha\inverse\times \beta)\circ \kappa$ and $(\alpha\inverse\times \beta)\circ \kappa\inverse$ to be equal. {(ii)}~of Proposition~\ref{prop:pushing_measure} says that equality  of these measures,  $m\circ\kappa=m\circ\kappa\inverse$, is also a sufficient condition in our situation for the measure $\mu$ to exist.

A similar problem appears in the general setting; there the cocycle $\Delta_X$ is replace by the adjoining function of the second correspondence involved in the composition. How to overcome this obstruction?

Let $(X,\alpha)\colon(G_1,\chi_1)\to(G_2,\chi_2)$ and $(Y,\beta)\colon(G_2,\chi_2)\to(G_3,\chi_3)$ be topological correspondences and let $\Delta_2$ be the adjoining function of $(Y,\beta)$. Then we realise  $\Delta_2$ as a 1\nb-cocycle on the proper groupoid $(Z\rtimes G_2)$ and \emph{decompose} it into a quotient $\Delta_2=b\circ s_{Z\rtimes G_2}/b \circ r_{Z\rtimes G_2}$ for a 0\nb-cochain $b$ on $Z\rtimes G_2$. Here $s_{Z\rtimes G_2}$ and $r_{Z\rtimes G_2}$ denote the source and the range maps of $Z\rtimes G_2$, respectively. Using Proposition~\ref{prop:pushing_measure} we show that there is a unique measure $\mu$ which gives the disintegration $bm=\mu\circ\lambda$. We modify the map $\Psi\colon \Contc(Z)\to\Contc(\Omega)$ discussed above (the discussion following Equation~\eqref{eq:prel-2} on page~\pageref{eq:prel-2}) to consider the 0\nb-cochain $b$. Then this $\mu$ and modified $\Psi$  produce the desired isomorphism of $\Cst$\nb-correspondences. 

In this construction, the required 0\nb-cochain $b$, which is a function of $Z$, is not unique. However, as mentioned earlier, for any two such 0\nb-cochains the $\Cst$\nb-correspondences associated with the composites are isomorphic to $\Hils(X)\mathbin{\hat\otimes}_{\Cst(G_2,\chi_2)}\Hils(Y)$. Given two 0\nb-cochains $b$ and $b'$ which decompose $\Delta_2$ as above, with some slight work, one may show that there is a positive continuous function $c$ on $Z$ with $b=c b'$. This explains the isomorphism of $\Cst$\nb-correspondences associated with the two composites.
\smallskip

 The theory of groupoid cohomology which we need is developed in~\cite{Holkar2015Construction-of-Corr}*{Section 1} and we prove the other necessary results in this article. The main result we need is Proposition~\ref{prop:proper_gpd_first_cohom_zero} which asserts that the first $\R$\nb-valued cohomology of a proper groupoid is trivial. Lemma~\ref{lemma:proper-gpd-has-prob-measures} is the main tool to prove this proposition. This lemma says that a locally compact proper groupoid $G$ equipped with a Haar system and $G\7\base[G]$ paracompact carries an invariant family of probability measures.

Anantharaman Delaroche and Renault introduced the notions of amenability for a \emph{measured} Borel groupoid (\cite{Anantharaman-Renault2000Amenable-gpd}*{Definition 3.2.8}), \emph{measurewise amenability} for a Borel groupoid (\cite{Anantharaman-Renault2000Amenable-gpd}*{Definition 3.3.1}) and \emph{topological} amenability for a locally compact topological groupoid (\cite{Anantharaman-Renault2000Amenable-gpd}*{Definition 2.2.7}). In~\cite{Anantharaman-Renault2000Amenable-gpd}*{Proposition 3.3.5}, they prove that topological amenability implies measurewise amenability. The definition of measurewise amenability implies that if $G$ is measurewise amenable, then for any Borel Haar system on $G$ and a quasi-invariant measure for the Haar system, $G$ is an amenable measured groupoid.

Our Lemma~\ref{lemma:proper-gpd-has-prob-measures} implies that a locally compact proper groupoid with a Haar system $(G,\alpha)$ and $G/\base[G]$ paracompact is topologically amenable. Anantharaman Delaroche and Renault prove this fact---~\cite{Anantharaman-Renault2000Amenable-gpd}*{Proposition 3.3.5} says that topological amenability implies measurewise amenability. \cite[Proposition 2.2.5]{Anantharaman-Renault2000Amenable-gpd} implies Lemma~\ref{lemma:proper-gpd-has-prob-measures} also. But the proposition proves a general statement than the lemma, and both proofs are different. Our proof is a simple minded one.
\smallskip

As shown in~\cite{Holkar2015Construction-of-Corr}*{Example 3.10 and 3.7}, the generalized morphism defined by Buneci and Stachura in~\cite{Buneci-Stachura2005Morphisms-of-lc-gpd}, and the topological correspondences for the groupoids with Hausdorff space of units introduced by Tu in~\cite{Tu2004NonHausdorff-gpd-proper-actions-and-K}, respectively, are topological correspondences. Our construction of composite matches the ones described by Tu~\cite{Tu2004NonHausdorff-gpd-proper-actions-and-K}, and Buneci and Stachura~\cite{Buneci-Stachura2005Morphisms-of-lc-gpd}, see Examples~\ref{exm:Stadler-Ouchi-correspondence-2} and~\ref{exa:Buneci-Stachura-exm}, respectively.
\medskip

We discuss some examples at the end of the article. A continuous map $f\co X\to Y$ of spaces gives a topological correspondence $(X,\delta_X)$ from $Y$ to $X$ (\cite{Holkar2015Construction-of-Corr}*{Example 3.1}). Here $\delta_X=\{\delta_x\}_{x\in X}$ is the familiy of measures where each $\delta_x$ is the point mass at $x\in X$. A continuous group homomorphism $\phi\co G\to H$ gives a topological correspondences $(H, \beta\inverse)$ from $G$ to $H$ (\cite{Holkar2015Construction-of-Corr}*{Example 3.4}) where $\beta$ is the Haar measure on $H$. If $g\colon Y\to Z$ is a function, then Example~\ref{exa:cont-function-as-corr-2} shows that the composite of the topological correspondences $(X,\delta_X)$ and $(Y, \delta_Z)$ is the correspondence obtained from the function $g\circ f\colon X\to Z$. Example~\ref{exa:gp-homo-as-corr} shows that a similar result holds for group homomorphisms. Both these examples agree with the well-known behaviour of the $\Cst$\nb-functor for spaces and groups.

Let $G$ be a locally compact group, and let $H$ and $K$ be closed subgroups of $G$. Example~\ref{exa:topological-induction} shows how to use topological correspondences to induce a \emph{topological} representation of $K$ to $H$. 

Most of our terminology, definitions, hypotheses and notation are defined in~\cite{Holkar2015Construction-of-Corr}. Now we describe the structure of the article briefly. In the first section, we revise few definitions, notation and results in~\cite{Holkar2015Construction-of-Corr}. We prove that every locally compact proper groupoid equipped with a Haar system carries an invariant continuous family of probability measures. Then using this result we prove that the first cohomology of a proper groupoid is trivial. 

In the second section, we describe the composition of topological correspondences and prove the main result Theorem~\ref{thm:well-behaviour-of-composition} which says that the $\Cst$\nb-correspondence associated with a composite is isomorphic to the composite of the $\Cst$\nb-correspondences.

The last section contains examples. Most of the examples are related to the ones in~\cite{Holkar2015Construction-of-Corr}.

\newpage
\section{Preliminaries}\label{prel}
\subsection{Revision}
\label{rev}
The symbols $\iso,\homeo,\R^+$ and $\R^+_*$ stand for isomorphic, homeomorphic, the set of positive real numbers and the multiplicative group of positive real numbers, respectively. The symbol $\otimes$ and $\hat\otimes$ indicate the algebraic tensor product modules and the interior tensor product Hilbert modules, respectively.
 
We work with continuous families of measures and all the measures are assumed to be positive, Radon and $\sigma$\nb-finite. The families of measures are denoted by small Greek letters and the corresponding integration function that appears in the continuity condition is denoted by the Greek upper case letter used to denote the family of measures. For example, if $\lambda$ is a family of measures along a map $f\colon X\to Y$, then $\Lambda\colon \Contc(X)\to \Contc(Y)$ is the function $\Lambda(f)(y)=\int_Xf\,\dd^y$. The capitalisations of $\alpha$, $\beta$, $\chi$ and $\mu$ are  $A$, $B$, $\chi$ and $M$, respectively. 

However, for a single measure on a space, which is a family of measures along the constant map onto a point, we follow the traditional convention, that is, the same letter is used to denote the measure and the corresponding integration functional. For example, if $\alpha$ is a measure on $X$, then $\alpha(f)=\int_X f\,\dd\alpha$ for $f\in \Contc(X)$.

Let $G$ be a groupoid, then $r_G,s_G$ and $\textup{inv}_G$ denote the source, range and the inversion maps for $G$. Given a left $G$\nb-space $X$, we tacitly assume that the momentum map for the action is $r_X$. If $X$ is a right $G$\nb-space, then $s_X$ is the momentum map for the action.

 We denote $G\times_{s_G,\base[G],r_X}X$, the fibre product for $G$ and $X$ over $\base[G]$ along $s_G$ and $r_X$, by $G\times_{\base[G]}X$. If $X$ is a right $G$\nb-space, then $X\times_{\base[G]}G$ has a similar meaning.  For a left $G$\nb-space $X$, $G\ltimes X$ is the transformation groupoid and its set of arrows is the fibre product $G\times_{\base[G]}X$. Similar is the meaning of $X\rtimes G $ a right $G$\nb-space $X$.

For $A,B\subseteq \base[G]$ we define $G^A=r_G\inverse(A), G_B=s_G\inverse(B)$ and $G^A_B=G^A\cap G_B$. When $A=\{u\}$ and $B=\{v\}$ are singletons, we simply write $G^u$, $G_v$ and $G^u_v$ instead of $G^{\{u\}}$, $G_{\{v\}}$ and $G^{\{u\}}_{\{v\}}$, respectively. Let $X$ and $Y$ be left and right $G$\nb-spaces, respectively, and let $A\subseteq \base[G]$ and $u\in \base[G]$. Then $X^A, X^u,Y_B$ and $Y_u$ have the similar obvious meanings.

We denote a $\Cst$\nb-correspondence only by the Hilbert module involved in it; we do not write the representation of the left $\Cst$\nb-algebra. Thus we say `$\Hils$ is a $\Cst$\nb-correspondence from a $\Cst$\nb-algebra $A$ to $B$', and not `$(\Hils,\phi)$ is a $\Cst$\nb-correspondence from a $\Cst$\nb-algebra $A$ to $B$' where $\phi\colon A\to\Bound_B(\Hils)$ is the nondegenerate {\Star}representation involved in the definition of the correspondence. We also write `$\Hils\colon A \to B$ is a $\Cst$\nb-correspondence'.

\label{page:compo-of-hilm}Now we sketch the process of composing the $\Cst$\nb-correspondences briefly and explain a few notation along the way. 
%
%
Let $A,B$ and $C$ be $\Cst$\nb-algebras, and let $\Hils\colon A\to B$ and $\Hils[F]\colon B\to C$ be $\Cst$\nb-correspondences. 
Endow $\Hils\otimes_{\C}\Hils[F]$ with the inner product $\inpro{\zeta\otimes\xi}{\zeta'\otimes\xi'}=\inpro{\xi}{\inpro{\zeta}{\zeta'}\xi'}$. Let $N\subseteq\Hils\otimes_{\C}\Hils[F]$ be the closed vector subspace of the vectors of zero norm, that is, $N=\{z\in\Hils\otimes_{\C}\Hils[F]: \inpro{z}{z}=0\}$. 
The proof of Proposition~{4.5} in~\cite{Lance1995Hilbert-modules} shows that the subspace $N$ is same as the the subspace spanned by the elements of the form $\zeta b\otimes\xi-\zeta\otimes b\xi$ where $\zeta\in \Hils, \xi\in\Hils[F]$ and $b\in B$.
The Hilbert $C$\nb-module $\Hils\hat\otimes_B\Hils[F]$ is the completion of $(\Hils\otimes_{\C}\Hils[F])/N$ in the norm induced by $\inpro{}{}$. We denote the equivalence class of $\zeta\otimes\xi\in\Hils\otimes_{\C}\Hils[F]$ in $\Hils\hat\otimes_B\Hils[F]$ by $\zeta\hat\otimes\xi$. The action of $A$ on $\Hils\hat\otimes_B\Hils[F]$ is $a(\zeta\hat\otimes \xi)=a\zeta\hat\otimes\xi$ where $a\in A$ and $\zeta\hat\otimes\xi\in\Hils\hat\otimes\Hils[F]$.
We call the map $\xi\otimes_{\C}\zeta\mapsto \xi\hat\otimes\zeta$,  $\Hils\otimes_{\C}\Hils[F] \to \Hils\hat\otimes\Hils[F]$, the obvious map of Hilbert $C$\nb-modules which, clearly, has a dense image.

\begin{definition}[Topological correspondence]
  \label{def:correspondence}
A \emph{topological correspondence} from a locally compact groupoid $G$ with a Haar
system $\alpha$ to a locally compact groupoid $H$ equipped with a Haar system $\beta$ is a pair $(X, \lambda)$, where:
	\begin{enumerate}[label=\textit{\roman*}), leftmargin=*]
		\item $X$ is a locally compact $G$-$H$-bispace,
                \item the action of $H$ is proper,
              	\item $\lambda = \{\lambda_u\}_{u\in\base}$ is an
                  $H$\nb-invariant proper continuous family of measures along the momentum map $s_X\colon X\to\base$,
		\item  there exists a continuous function $\Delta: G
                  \ltimes X \rightarrow \R^+$ such that for each $u
                  \in \base$ and $F\in \Contc(G\times_{\base[G]} X)$,
                  \begin{multline*}
                    \int_{X_{u}} \int_{G^{r_X(x)}} F(\gamma\inverse,
                    x)\, \dd\alpha^{r_X(x)}(\gamma)\, \dd\lambda_{u}(x) \\=
                    \int_{X_{u}} \int_{G^{r_X(x)}} F(\gamma,
                    \gamma\inverse x)\, \Delta(\gamma, \gamma\inverse
                    x) \, \dd\alpha^{r_X(x)}(\gamma)
                    \,\dd\lambda_{u}(x). 
                \end{multline*}
	\end{enumerate}
\end{definition}

The function $\Delta$ is unique and is called \emph{the adjoining function} of the
correspondence. 

For $\phi \in \Contc(G)$, $f \in \Contc(X)$ and $\psi \in \Contc(H)$ define
the functions $\phi\cdot f$ and $f\cdot \psi$ on $X$ as follows:
\begin{equation}\label{def:left-right-action}
 \left\{\begin{aligned}
(\phi\cdot f)(x) &\defeq \int_{G^{r_X(x)}} \phi(\gamma) f(\gamma\inverse x) \,
\Delta^{1/2}(\gamma, \gamma\inverse x) \; \dd \alpha^{r_X(x)}(\gamma),\\
(f\cdot\psi )(x) &\defeq \int_{H^{s_X(x)}} f(x\eta)\psi({\eta}\inverse) \; \dd
\beta^{s_X(x)}(\eta).
  \end{aligned}\right.
\end{equation}
For $f, g \in \Contc(X)$ define the function $\langle f, g \rangle $ on $H$ by
\begin{align}\label{def:inner-product}
\langle f, g \rangle (\eta) &\defeq \int_{X_{r_H(\eta)}} \overline{f(x)} g(x\eta) \; \dd
\lambda_{r_H(\eta)}(x).
\end{align}
Very often we write $\phi f$ and $f\psi$ instead of $\phi\cdot f$
and $f\cdot \psi$, respectively.~\cite{Holkar2015Construction-of-Corr}*{Lemma 2.10} proves that $\phi f, f\psi\in \Contc(X)$ and $\inpro{f}{g}\in \Contc(H)$.
\begin{theorem}[\cite{Holkar2015Construction-of-Corr}*{Theorem 2.39}]\label{thm:mcorr-gives-ccorr}
Let $(G, \alpha)$ and $(H, \beta)$ be locally compact groupoids with Haar systems. Then a topological correspondence $(X, \lambda)$ from $(G, \alpha)$ to $(H, \beta)$ produces a\; $\Cst$\nb-correspondence $\Hils(X)$ from $\Cst(G,\alpha)$ to $\Cst(H,\beta)$.
\end{theorem}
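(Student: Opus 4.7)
The plan is to endow $\Contc(X)$ with the $\Contc(H)$-valued inner product and the left and right actions in Equations~\eqref{def:left-right-action} and~\eqref{def:inner-product}, verify the pre-Hilbert bimodule axioms, establish positivity of the inner product, prove that the left $\Contc(G)$-action is bounded in the $\Cst(H,\beta)$-module norm, and finally complete to obtain $\Hils(X)$.

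First I would confirm that the three formulas land in $\Contc(H)$ and $\Contc(X)$ respectively, which is supplied by the lemma cited from~\cite{Holkar2015Construction-of-Corr}. The purely algebraic identities come next: associativity of the right $\Contc(H)$-action uses Fubini plus right invariance of $\beta$; the identity $\langle f, g\cdot\psi\rangle = \langle f,g\rangle * \psi$ uses precisely the $H$-invariance of $\lambda$ from (iii) of Definition~\ref{def:correspondence}; and $\langle f,g\rangle^* = \langle g,f\rangle$ follows from the change of variable $\eta\mapsto\eta\inverse$ together with $H$-invariance of $\lambda$. The identity $\langle \phi\cdot f, g\rangle = \langle f, \phi^*\cdot g\rangle$, for $\phi\in\Contc(G)$, is where the adjoining function $\Delta$ earns its keep: it is exactly the quasi-invariance condition (iv) that converts the integral over $G$ produced by $\phi\cdot f$ into one matching the action on $g$, the two half powers $\Delta^{1/2}$ in~\eqref{def:left-right-action} combining into the full $\Delta$ appearing in (iv). Multiplicativity $\phi\cdot(\phi'\cdot f) = (\phi*\phi')\cdot f$ is another Fubini-plus-$\Delta$ calculation.

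The hard step will be positivity of the inner product, i.e.\ $\langle f,f\rangle\ge 0$ in $\Cst(H,\beta)$. The cleanest route is to fix a quasi-invariant measure on $\base[H]$, push it through $\lambda$ to a measure on $X$, and realise the $H$-regular representation of $\langle f,f\rangle$ on an $\Ltwo$-space built from $X$ as $T^*T$, where $T$ is a multiplication-plus-translation operator defined from $f$ and the $H$-action. Since $\Cst(H,\beta)$ admits enough such regular-type representations to detect positivity, this yields positivity of $\langle f,f\rangle$ in $\Cst(H,\beta)$.

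Once positivity is established, I would take the separated completion of $\Contc(X)$ with respect to the norm $\|f\|^2 = \|\langle f,f\rangle\|_{\Cst(H,\beta)}$ to obtain a Hilbert $\Cst(H,\beta)$-module. For the left action, a Cauchy--Schwarz estimate against the $I$-norm on $\Contc(G)$, using the continuity and properness of $\alpha$ together with boundedness of $\Delta$ on compact sets in $G\ltimes X$, yields $\|\phi\cdot f\| \le \|\phi\|_I\,\|f\|$; this extends the $\Contc(G)$-action to a $*$-homomorphism $\Cst(G,\alpha)\to\Bound_{\Cst(H,\beta)}(\Hils(X))$. Non-degeneracy follows from applying an approximate unit of $\Cst(G,\alpha)$ fibrewise via $\alpha$. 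This produces the desired $\Cst$-correspondence $\Hils(X)$ from $\Cst(G,\alpha)$ to $\Cst(H,\beta)$.
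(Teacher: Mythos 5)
This theorem is stated in the present paper only as a quotation of \cite{Holkar2015Construction-of-Corr}*{Theorem 2.39}, with no proof given here, so the comparison is against the construction in that reference; your outline follows essentially the same route. The steps you describe --- the algebraic bimodule identities obtained from the $H$\nb-invariance of $\lambda$ and from the quasi-invariance condition (with the two factors of $\Delta^{1/2}$ in the left action combining into the full $\Delta$ of condition (iv)), positivity of $\inpro{f}{f}$ checked through representations of $\Cst(H,\beta)$ disintegrated over quasi-invariant measures on $\base$, the $I$\nb-norm estimate giving boundedness and hence a nondegenerate {\Star}homomorphism $\Cst(G,\alpha)\to\Bound_{\Cst(H,\beta)}(\Hils(X))$, and the final completion --- are precisely the ingredients of that proof, so the proposal is correct in approach.
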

\subsection{Cohomology of proper groupoids}
In this subsection, we show that the first continuous cohomology group with real coefficients is trivial. It can be readily checked that the result is valid for the groupoid equivariant continuous cohomology introduced in~\cite{Holkar2015Construction-of-Corr}*{Section 1}, and also for the (equivariant) Borel cohomology of a proper (topological) groupoid.
\begin{lemma}[Lemma 1, Appendix I in~\cite{Bourbaki2004Integration-II-EN}]\label{lemma:cutoff-function-for-equ-relation}
  Let $X$ be a locally compact Hausdorff space, $R$ an open equivalence relation in $X$, such that the quotient space $X/R$ is paracompact; let $\pi$ be the canonical mapping of $X$ onto $X/R$. There is a continuous real-valued function $F\geq0$ on $X$ such that:
  \begin{enumerate}[label=\roman*)]
  \item $F$ is not identically zero on any equivalence class with respect to $R$;
  \item for every compact subset $K$ of $X/R$, the intersection of $\pi\inverse(K)$ with $\supp(F)$ is compact.
  \end{enumerate}
\end{lemma}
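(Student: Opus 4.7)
The plan is to build $F$ as a locally finite sum obtained by combining a partition of unity pulled back from the paracompact quotient $X/R$ with bump functions on $X$ chosen at points of a set-theoretic section of $\pi$.

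First, I would observe that since $R$ is open, the quotient map $\pi$ is open, so $X/R$ inherits local compactness from $X$ (being the open continuous image of a locally compact space) and is paracompact Hausdorff by assumption. For each $y\in X/R$, I would fix a point $\sigma(y)\in\pi\inverse(y)$, a relatively compact open neighbourhood $N_y$ of $\sigma(y)$ in $X$, and, using Urysohn's lemma together with local compactness, a continuous function $\phi_y\colon X\to[0,1]$ with $\supp(\phi_y)\subseteq N_y$ and $\phi_y(\sigma(y))=1$. Since $\pi$ is open, $O_y\defeq\pi(\{\phi_y>0\})$ is an open neighbourhood of $y$ in $X/R$, and the family $\{O_y\}_{y\in X/R}$ covers $X/R$.

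Next, using paracompactness and local compactness of $X/R$, I would pass to a locally finite open refinement $(U_i)_{i\in I}$ of $(O_y)$ with each $\overline{U_i}$ compact; choose indices $y(i)$ with $U_i\subseteq O_{y(i)}$; and take a continuous partition of unity $(g_i)$ subordinate to $(U_i)$ with $\supp(g_i)\subseteq U_i$. The proposed function is
\[
F(x)\defeq\sum_{i\in I} g_i(\pi(x))\,\phi_{y(i)}(x).
\]
Local finiteness of $(U_i)$ in $X/R$ pulls back through $\pi\inverse$ to local finiteness of $(\pi\inverse(U_i))$ in $X$, so the sum is locally finite and $F$ is continuous and nonnegative. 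For~(i), any $y\in X/R$ satisfies $g_i(y)>0$ for some $i$; then $y\in U_i\subseteq O_{y(i)}$ supplies a point $x\in\pi\inverse(y)$ with $\phi_{y(i)}(x)>0$, whence $F(x)>0$. For~(ii), a compact $K\subseteq X/R$ meets only finitely many $\supp(g_i)$ by local finiteness, and $\supp(F)\cap\pi\inverse(K)$ is a closed subset of the corresponding finite union of the compact sets $\supp(\phi_{y(i)})\subseteq\overline{N_{y(i)}}$, hence compact.

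The step I expect to be the main obstacle is delivering property~(ii): the naive approach of summing one bump per equivalence class produces a sum indexed by the (possibly uncountable) set $X/R$ and has no reason to be proper over $X/R$. The construction above instead trades that for a locally finite indexing downstairs on $X/R$, while the vertical factors $\phi_{y(i)}$ keep each summand supported in a compact subset of $X$. This exchange of horizontal local finiteness on $X/R$ for vertical compactness on $X$ is precisely what makes the support of $F$ proper along $\pi$ in the sense required by~(ii).
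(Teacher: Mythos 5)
Your construction is correct. The paper gives no proof of this lemma---it is imported verbatim from Bourbaki (Lemma~1 of Appendix~I in the cited volume)---and your argument is essentially Bourbaki's own: a locally finite partition of unity on the paracompact quotient, pulled back through $\pi$ and multiplied by compactly supported bumps chosen fibrewise, so that local finiteness downstairs together with compact vertical supports (and the fact that a locally finite union of closed sets is closed, which is what puts $\supp(F)\cap\pi\inverse(K)$ inside the finite union $\bigcup_{i\in J}\supp(\phi_{y(i)})$) yields both~(i) and~(ii).
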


A continuous map $f\colon X\to Y$ is proper, if for each $y\in Y$, $f\inverse(y)\subseteq X$ is quasi-compact. We call a locally compact groupoid proper if the map $(r_G, s_G)\colon G\to\base[G]\times\base[G]$, $(r_G, s_G)(\gamma)=(r_G(\gamma),s_G(\gamma))$, is proper.
\begin{proposition}[\cite{Tu2004NonHausdorff-gpd-proper-actions-and-K}*{Proposition 2.10}]
 \label{prop:prop-gpd-equi}
Let $G$ be a locally compact groupoid. Then the following assertions are equivalent.
\begin{enumerate}[label=\roman*)]
\item $G$ is proper;
\item the map $(r_G, s_G)\colon G\to\base[G]\times\base[G]$ is closed and for each $u\in\base[G]$, $G^u_u\subseteq G$ is quasi-compact;
\item for all quasi-compact subsets $K, L\subseteq\base[G]$, $G^L_K$ is quasi-compact;
\item for all compact subsets $K, L\subseteq\base[G]$, $G^L_K$ is compact;
\item for every quasi-compact subsets $K\subseteq\base[G]$, $G^K_K$ is quasi-compact;
\item for all $x,y\in\base[G]$, there are compact neighbourhoods $K_x$ and $L_y$ of $x$ and $y$, respectively,such that $G^{K_x}_{L_y}$ is quasi-compact.
\end{enumerate}
\end{proposition}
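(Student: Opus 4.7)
The plan is to prove the six conditions equivalent via a cycle of implications, exploiting the Hausdorffness of $\base[G]$ (which makes compact and quasi-compact subsets of $\base[G]$ coincide) and local compactness at every step. The strategy is to first establish the routine equivalences among (iii), (v), and (vi) by finite-cover arguments on $\base[G]\times\base[G]$, and then relate them to (i), (ii), and (iv) through the standard characterisation of proper maps into a locally compact Hausdorff target.

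The easy implications go as follows. (iii)~$\Rightarrow$~(i) by specialising $K,L$ to singletons; (iii)~$\Rightarrow$~(v) by taking $K=L$; (iv)~$\Rightarrow$~(iii) trivially, since compact implies quasi-compact. For (v)~$\Rightarrow$~(vi), given $x,y\in\base[G]$ I use local compactness of $\base[G]$ to choose compact neighbourhoods $K_x\ni x$ and $L_y\ni y$, set $M\defeq K_x\cup L_y$, and observe that $G^M_M$ is quasi-compact by (v) while $G^{K_x}_{L_y}=(r_G,s_G)\inverse(K_x\times L_y)$ is closed in $G^M_M$ (because $K_x,L_y$ are closed in the Hausdorff space $\base[G]$), hence quasi-compact. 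The reverse passage (vi)~$\Rightarrow$~(iii) is a finite-subcover argument: for quasi-compact $K,L\subseteq\base[G]$ the compact product $L\times K$ is open-covered by the interiors of the $L_y\times K_x$ supplied by (vi), and a finite subcover realises $G^L_K$ as a closed subset of the quasi-compact union $\bigcup_i G^{L_{y_i}}_{K_{x_i}}$.

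The first substantive implication is (i)~$\Rightarrow$~(ii): from pointwise quasi-compactness of the fibres one must extract closedness of $(r_G,s_G)$. I would route this through a tube-lemma style argument in the locally compact Hausdorff space $\base[G]\times\base[G]$: quasi-compactness of $G^u_v$ allows one to produce, for every open neighbourhood of the fibre, a saturated open neighbourhood of $(u,v)$, and closedness of $(r_G,s_G)$ follows by the standard fact that a continuous map into a locally compact Hausdorff target with quasi-compact fibres is closed iff preimages of compact sets are quasi-compact. The converse (ii)~$\Rightarrow$~(i) is immediate, since diagonal isotropy is a special case of (i) and closedness of the map contributes nothing in the other direction.

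The main obstacle, and the step where I would lean hardest on~\cite{Tu2004NonHausdorff-gpd-proper-actions-and-K}, is upgrading quasi-compactness of $G^L_K$ to actual compactness in (iv): this requires $G^L_K$ to be Hausdorff despite $G$ being possibly non-Hausdorff. The guiding observation is that each fibre $G^u_v$ is a torsor over the quasi-compact isotropy group $G^u_u$, so two points of $G^L_K$ lying in the same fibre are separable by a translation argument inside the quasi-compact isotropy, while points lying in distinct fibres are separable via their images under $(r_G,s_G)$ in the Hausdorff space $\base[G]\times\base[G]$. Patching these local separations into a global Hausdorff topology on $G^L_K$ is the delicate step where Tu's machinery for non-Hausdorff groupoids becomes essential, and this is the part of the proof I expect to be genuinely hard rather than routine.
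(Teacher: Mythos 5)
A preliminary remark on the comparison you were asked for: the paper contains no proof of this proposition. It is imported wholesale from Tu's paper (Proposition~2.10 there), and the only original content in the source is the sentence after the statement recording which equivalences survive when $\base[G]$ is not Hausdorff. So there is no ``paper's proof'' to match; your attempt has to be judged on its own, and it has a genuine gap at its central step.

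The gap is (i)$\Rightarrow$(ii). You take the paper's one-line definition of a proper map literally --- every fibre $G^u_v=(r_G,s_G)\inverse(u,v)$ is quasi-compact --- and propose to extract closedness of $(r_G,s_G)$ from it by a ``tube-lemma style argument''. No such argument exists, even with the groupoid structure available. Take $G=\mathbb Z\ltimes S^1$ for an irrational rotation: every fibre $G^u_v$ is empty or a single point, so (i) holds in the fibrewise sense, yet the image of $(r_G,s_G)$ is the dense, non-closed orbit equivalence relation, and $G^{S^1}_{S^1}=G=\mathbb Z\times S^1$ is not quasi-compact, so (ii)--(vi) all fail. The tube lemma produces a saturated neighbourhood of a quasi-compact fibre only when the map is already known to be closed (or is a projection along a quasi-compact factor), and the ``standard fact'' you invoke --- closed with quasi-compact fibres iff preimages of compacta are quasi-compact, over a locally compact Hausdorff target --- is an equivalence between two conditions each strictly stronger than fibrewise quasi-compactness, so using it to \emph{derive} closedness presupposes condition (iii), which is what you are trying to reach. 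The proposition is true only because Tu's notion of proper is Bourbaki's (universally closed, equivalently closed with quasi-compact fibres); the paper's paraphrase silently drops the closedness and must not be taken at face value. With the correct reading, (i)$\Leftrightarrow$(ii) reduces to the translation homeomorphism $G^v_v\to G^u_v$, $\eta\mapsto\gamma\eta$ for a chosen $\gamma\in G^u_v$ (your ``diagonal isotropy is a special case'' has this backwards), and (ii)$\Rightarrow$(iii) is your standard fact used in its legitimate direction.

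Even granting a repaired (i)$\Leftrightarrow$(ii), your implication graph does not close: you establish (iii)$\Rightarrow$(v)$\Rightarrow$(vi)$\Rightarrow$(iii), (iv)$\Rightarrow$(iii), (iii)$\Rightarrow$(i) and (i)$\Leftrightarrow$(ii), but there is no arrow from $\{$(i),(ii)$\}$ back into $\{$(iii),\dots,(vi)$\}$ and no arrow into (iv) at all. The two missing arrows are exactly the two substantive ones: (ii)$\Rightarrow$(iii), and (iii)$\Rightarrow$(iv), i.e.\ Hausdorffness of $G^L_K$ for compact $K,L$, which you correctly flag as the hard point but only gesture at via a separation-by-cases sketch that is not carried out. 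The finite-cover arguments for (v)$\Rightarrow$(vi) and (vi)$\Rightarrow$(iii), and the use of Hausdorffness of $\base[G]$ to identify compact with quasi-compact subsets of the unit space, are correct as written.
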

\cite{Tu2004NonHausdorff-gpd-proper-actions-and-K}*{Proposition 2.10} is stated for groupoids which do not, necessarily, have Hausdorff space of units in which case (i)-(v) are equivalent and (v)$\implies$(vi).
\begin{lemma}\label{lemma:proper-gpd-has-prob-measures}
  Let $(G,\alpha)$ is a locally compact proper groupoid with a Haar
  system. If $G\7\base[G]$ is paracompact then there is a continuous invariant
  family of probability measures on $G$. Furthermore, each measure in this family has a compact support.
\end{lemma}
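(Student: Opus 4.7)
The plan is to imitate the standard cutoff-function construction for proper actions: produce a continuous nonnegative function $F$ on $\base[G]$ that is not identically zero on any orbit, renormalise the Haar system $\alpha$ by its pullback along $s_G$, and verify the resulting family of measures is invariant, fibrewise a probability, and of compact support. The hard part will be continuity of the normalising function.

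First I would apply Lemma~\ref{lemma:cutoff-function-for-equ-relation} to the orbit equivalence relation on $\base[G]$, with orbit map $\pi\co\base[G]\to G\7\base[G]$. The relation is open because $r_G$ and $s_G$ are open (a consequence of the existence of a Haar system), the quotient is Hausdorff by properness of $(r_G,s_G)$, and paracompact by hypothesis. This furnishes a continuous $F\co\base[G]\to[0,\infty)$ not identically zero on any orbit, and such that $\pi\inverse(K)\cap\supp(F)$ is compact for every compact $K\subseteq G\7\base[G]$. Taking $K=\{\pi(u)\}$ shows $\supp(F)\cap[u]$ is compact for every $u\in\base[G]$, where $[u]$ denotes the orbit of $u$.

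Next I would set $h(u)\defeq\int_{G^u}F(s_G(\gamma))\,\dd\alpha^u(\gamma)$. Finiteness holds because the integrand is supported in $G^{\{u\}}_{\supp(F)\cap[u]}$, which is compact by Proposition~\ref{prop:prop-gpd-equi}(iv). Positivity follows from condition~(i) of the cutoff lemma combined with the full-support property $\supp(\alpha^u)=G^u$ of the Haar system. Orbit-constancy $h(r_G(\gamma))=h(s_G(\gamma))$ is immediate from left invariance of $\alpha$ and $s_G(\gamma\eta)=s_G(\eta)$. The delicate step is showing $h$ is continuous; I would do this by a partition of unity argument. Choose $(\phi_i)_{i\in I}$ on the paracompact Hausdorff space $G\7\base[G]$ subordinate to a locally finite open cover by relatively compact sets, and decompose $h=\sum_i h_i$ with
\[
h_i(u)\defeq\int_{G^u}(\phi_i\circ\pi)(s_G(\gamma))\,F(s_G(\gamma))\,\dd\alpha^u(\gamma).
\]
Each $(\phi_i\circ\pi)F$ is compactly supported on $\base[G]$ by condition~(ii) of the cutoff lemma applied with $K=\supp(\phi_i)$, so each $h_i$ is continuous by the defining property of the Haar system. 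Because $\pi$ is open and $(\phi_i)$ is locally finite, every $u$ has an open neighbourhood on which only finitely many $h_i$ are nonzero, and on that neighbourhood $h$ is a finite sum of continuous functions, hence continuous at $u$.

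Finally, I would set $\dd\mu^u(\gamma)\defeq h(u)\inverse F(s_G(\gamma))\,\dd\alpha^u(\gamma)$. Then $\mu^u(G^u)=1$, and $\supp(\mu^u)\subseteq G^{\{u\}}_{\supp(F)\cap[u]}$ is compact, yielding the last clause of the lemma. Continuity of $\{\mu^u\}$ is inherited from that of $F$, $h$, and $\alpha$, while invariance $\gamma_{*}\mu^{s_G(\gamma)}=\mu^{r_G(\gamma)}$ reduces to left invariance of $\alpha$ together with the orbit-constancy of $h$ established above. The principal obstacle throughout is the continuity of $h$; once that is secured, the remaining verifications are formal.
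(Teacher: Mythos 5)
Your proposal follows the paper's proof essentially verbatim: apply the Bourbaki cutoff lemma to the orbit relation on $\base[G]$, normalise $F$ by $h(u)=\int_{G^u}F\circ s_G\,\dd\alpha^u$ (finite by properness, positive by full support, orbit-constant by invariance), and take $p^u=h(u)\inverse (F\circ s_G)\,\alpha^u$. The one place you go beyond the paper is in actually proving the continuity of $h$ (via a partition of unity on $G\7\base[G]$ and local finiteness), a point the paper passes over in silence even though it is needed for $p$ to be a \emph{continuous} family; your argument for it is sound, modulo the routine observation that $(\phi_i\circ\pi)F\circ s_G$, while not compactly supported on $G$, has $r_G$\nb-proper support, so integration against $\alpha^u$ still varies continuously.
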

\begin{proof}
  Since $G$ has a Haar system, the range map of $G$ is open, hence
  the quotient map $\pi\colon \base[G]\to G\backslash\base[G]$ is open. Since $G$ is
  proper, $G\backslash\base[G]$ is locally compact and Hausdorff. By hypothesis $G\7\base[G]$ is paracompact. Now we apply Lemma~\ref{lemma:cutoff-function-for-equ-relation} to get a function $F$ on $\base[G]$ such that $F$ is not identically zero on any $G$-orbit in $\base[G]$ and for every compact $K\subseteq G\backslash\base[G]$ the intersection
  $\supp(F)\cap\pi\inverse(K)$ is compact. Define $h\colon \base[G]\to
  \R^+$ by
\[
h(u)=\int_{G^u} F\circ s_G(\gamma) \;\dd\alpha^u(\gamma).
\]
Property (ii) of $F$ from Lemma~\ref{lemma:cutoff-function-for-equ-relation} and the full
support condition of $\alpha^u$ imply that $h(u)>0$. To see that
$h(u)<\infty$, notice that $\supp(F\circ s_G)\cap G^u\subseteq G$ is
compact:
\[
\gamma\in \supp(F\circ s_G)\cap G^u \implies s_G(\gamma)\in\supp(F)
\textnormal{ and } r_G(\gamma)=u.
\]
Thus if $\tilde u\subseteq \base[G]$ denotes the orbit of $u$, then
$\supp(F\circ s_G)\cap G^u\subseteq (r_G, s_G)\inverse(\{u\}\times\supp(F|_{\tilde u}))$. Property (ii) of $F$ from Lemma~\ref{lemma:cutoff-function-for-equ-relation} says that $\supp(F|_{\tilde u})$ is compact. As $G$ is a proper groupoid, the set $(r_G\times s_G)\inverse(\{u\}\times\supp(F|_{\tilde u}))$ is compact which implies that $\supp(F\circ s_G)\cap G^u$ is compact.

Using the invariance of $\alpha$, it is not hard to see that the function $h$ is constant on the orbits of $\base[G]$. Put
$F'=(F/h)\circ s_G$, then
\begin{equation}
  \label{eq:cut-off-function}
\int_{G^u} F'(\gamma) \;\dd\alpha^u(\gamma) =1.
\end{equation}
Denote $F'\,\alpha^u$ by $p^u$, then $p\defeq \{p^u \}_{u\in\base[G]}$ is a family of probability measures on $G$. Explicitly, $p$ is given by 
\[
\int_{G^u} f\,\dd p^u=\int_{G^u}f(\gamma)\;F'(\gamma)\;\dd\alpha^u(\gamma)
\]
for $f\in \Contc(G)$. It follows from the definition of measure \(p^u\), where \(u\in\base[G]\), that the the compact set $\supp(F\circ s_G)\cap G^u$ is the support of \(p_u\).

To check that $p$ is invariant, let $f\in \Contc(G)$ and $\eta\in G$, by the definition of $p$ we have
\[
\int_{G^{s_G(\eta)}} f(\eta\gamma)\;\dd p^{s_G(\eta)}(\gamma)=
\int_{G^{s_G(\eta)}} f(\eta\gamma)\; F'(\gamma) \;\dd\alpha^{s_G(\eta)}(\gamma).
\]
Now change the variable $\eta\gamma\mapsto \gamma$ to that the previous term equals 
\[
\int_{G^{r_G(\eta)}} f(\gamma)\; F'(\eta\inverse \gamma) \;\dd\alpha^{r_G(\eta)}(\gamma).
\]
Use the invariance of $\alpha$ and the fact that $F'(\eta\inverse\gamma)\defeq F\circ s_G(\eta\inverse\gamma)/h\circ s_G(\eta\inverse\gamma)=F\circ s_G(\gamma)/h\circ s_G(\gamma)=F'(\gamma)$ and compute further:
\begin{align*}
\int_{G^{r_G(\eta)}} f(\gamma)\; F'(\eta\inverse \gamma) \;\dd\alpha^{r_G(\eta)}(\gamma)
&=\int_{G^{s_G(\eta)}} f(\gamma)\; F'(\gamma) \;\dd\alpha^{r_G(\eta)}(\gamma)\\
&=\int_{G^{s_G(\eta)}} f(\gamma)\;\dd p^{r_G(\eta)}(\gamma).\qedhere
\end{align*}
\end{proof}
\begin{remark}
  \label{rem:amenabiltiy-prob-measure}
Anantharaman Delaroche and Renault define \emph{topological} amenability for a locally compact topological groupoid (\cite{Anantharaman-Renault2000Amenable-gpd}*{Definition 2.2.7}). Lemma~\ref{lemma:proper-gpd-has-prob-measures} says that every proper groupoid $G$ with a Haar system and $G\7\base[G]$ paracompact is topologically amenable.
\end{remark}

\begin{proposition}\label{prop:proper_gpd_first_cohom_zero}
Let $G$ be a locally compact proper groupoid and $\alpha$ a Haar system on
$G$. Then every $\R$\nb-valued 1-cocycle is
a coboundary, that is, $\HH^1(G; \R) = 0$.
\end{proposition}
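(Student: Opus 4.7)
The plan is the standard averaging argument: define a candidate $0$\nb-cochain by integrating the given cocycle against a $G$\nb-invariant family of probability measures, then use invariance together with the cocycle identity to recover the cocycle as its coboundary.

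First I would invoke Lemma~\ref{lemma:proper-gpd-has-prob-measures} to fix a continuous $G$\nb-invariant family $p = \{p^u\}_{u \in \base[G]}$ of probability measures on $G$, each with compact support. Given a continuous $1$\nb-cocycle $c\colon G \to \R$, define
\[
b(u) \defeq \int_{G^u} c(\gamma) \, \dd p^u(\gamma), \qquad u \in \base[G],
\]
which is well-defined since $c$ is continuous and $\supp(p^u)$ is compact. I claim $c = \delta b$, where $(\delta b)(\eta) = b(r_G(\eta)) - b(s_G(\eta))$.

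The verification is a short calculation. For $\eta \in G$, the invariance of $p$ established in Lemma~\ref{lemma:proper-gpd-has-prob-measures} gives
\[
b(r_G(\eta)) = \int_{G^{r_G(\eta)}} c(\gamma) \, \dd p^{r_G(\eta)}(\gamma) = \int_{G^{s_G(\eta)}} c(\eta \gamma) \, \dd p^{s_G(\eta)}(\gamma),
\]
and expanding $c(\eta\gamma) = c(\eta) + c(\gamma)$ together with the fact that $p^{s_G(\eta)}$ is a probability measure yields $b(r_G(\eta)) = c(\eta) + b(s_G(\eta))$, i.e.\ $c(\eta) = (\delta b)(\eta)$, as required.

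The main obstacle I anticipate is checking that $b$ is continuous, as needed for the continuous cohomology: since $c$ is not assumed compactly supported, continuity of the family $p$ cannot be applied to $c$ directly. I would handle this locally. For any compact $L \subseteq \base[G]$, the set $\bigcup_{u \in L} \supp(p^u)$ lies inside $r_G\inverse(L) \cap \supp(F \circ s_G)$, which is contained in $G^L_{L'}$ with $L' = \pi\inverse(\pi(L)) \cap \supp(F)$ compact by property (ii) of $F$ in Lemma~\ref{lemma:cutoff-function-for-equ-relation}; properness then forces $G^L_{L'}$ to be compact via Proposition~\ref{prop:prop-gpd-equi}. Choosing $\phi \in \Contc(G)$ equal to $1$ on a neighbourhood of this compact set, the function $\phi c \in \Contc(G)$ satisfies $\int \phi c \, \dd p^u = b(u)$ for every $u \in L$, so continuity of $b|_L$ follows from the defining continuity of the family $p$. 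As $L$ was arbitrary, $b$ is continuous.
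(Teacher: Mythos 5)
Your proof is correct and takes essentially the same route as the paper: average the cocycle against the invariant family of probability measures supplied by Lemma~\ref{lemma:proper-gpd-has-prob-measures} and use invariance together with the cocycle identity (only your sign convention for the coboundary differs from the paper's $c=\underline{b}\circ s_G-\underline{b}\circ r_G$). Your extra verification that $b$ is continuous, via the compact set $G^L_{L'}$ and a cut-off $\phi\in\Contc(G)$, addresses a point the paper's proof leaves implicit and is sound.
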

\begin{proof}
Let $p=\{p^u\}_{u\in \base[G]}$ be an invariant family of
probability measures on $G$ which is obtained using Lemma~\ref{lemma:proper-gpd-has-prob-measures}.
We claim that for a 1-cocycle $c\colon G \rightarrow \R$, the function 
\[\underline{b}(u) = \int_G c(\gamma) \,\dd p^{u}(\gamma) \text{ for } u\in\base[G]\]
satisfies $c=\underline{b} \circ s - \underline{b}\circ r$. Lemma~\ref{lemma:proper-gpd-has-prob-measures} says that the support of each measure in \(p\) is compact, hence the above integral is well-defined. To see that \(\underline{b}\) is the desired cochain, let $\eta\in G $ and compute: 
  \begin{align*}
   (\underline{b} \circ s- \underline{b}\circ r)(\eta) 
&= \int_{G^{s_G(\eta)}} c(\gamma) \,\dd p^{s(\eta)}(\gamma)-\int_{G^{r_G(\eta)}} c(\gamma) \,\dd p^{r(\eta)}(\gamma)\\
&= \int_{G^{s_G(\eta)}} c(\gamma) \,\dd p^{s(\eta)}(\gamma)-\int_{G^{s_G(\eta)}} c(\eta\gamma) \,\dd p^{s(\eta)}(\gamma)\\
&= \int_{G^{s_G(\eta)}} \left(c(\gamma) - c(\eta)+c(\gamma)\right)\dd p^{s(\eta)}(\gamma)\\
&= c(\eta) \int \dd p^{s(\eta)}(\gamma) = c(\eta).
  \end{align*}
We used the invariance of $p$ to get the second equality above.
\end{proof}

\section{Composition of correspondences}
\subsection{Preparation for composition}
\label{sec:composition-of-corr}
Let $G$ be a locally compact proper groupoid with $G/\base[G]$ paracompact. Since $G$ is proper the action of $G$ on $\base[G]$ given by $\gamma\cdot s_G(\gamma)=r_G(\gamma)$ for $\gamma\in G$ is proper. Let $\lambda$ be a Haar system on $G$. Then $\lambda$ induces a family of measures $[\lambda]$ along $\pi\colon \base[G]\to G/\base[G]$.  And $[\lambda]$ is defined as
\begin{equation}
  \label{eq:def-of-quo-lambda}
\int_{\base[G]} f\,\dd[\lambda]^{[u]}=\int_{G} f(\gamma\inverse\cdot u))\,\dd\lambda^u(\lambda)=\int_G f\circ s_G(\gamma)\,\dd\lambda^u(\gamma)
\end{equation}
for $f\in \Contc(\base[G])$. Note that in Equation~\ref{eq:def-of-quo-lambda}, $\gamma\inverse\cdot u$ does not stand for the composite of $\gamma\inverse$ and $u$ but for the action of $G$ on $\base[G]$, that is, $\gamma\inverse\cdot u=r_G(\gamma\inverse)=s_G(\gamma)$. We draw Figure~\ref{fig:symmetric-measures-1} which contains all this data.
\begin{figure}[htb]
  \centering
\[\begin{tikzcd}[scale=2]
G \arrow{r}{\lambda\inverse}[swap]{s_G} \arrow{d}{r_G}[swap]{\lambda}
 & \base[G] \arrow{d}{[\lambda]}[swap]{\pi} \\
 \base[G] \arrow{r}{[\lambda]}[swap]{\pi} 
&G/\base[G]
 \end{tikzcd}\]
\caption{}
\label{fig:symmetric-measures-1}
\end{figure}
 
A measure $m$ on $\base[G]$ induces measures $m\circ\lambda$ and $m\circ\lambda\inverse$ on $G$. For $f\in \Contc(G)$ 
\[
\int_G f\,\dd(m\circ\lambda\inverse)\defeq \int_{\base[G]}\int_{G} f(\gamma\inverse)\,\dd\lambda^u(\gamma)\,\dd m(u).
\]
The measure $m\circ\lambda$ is defined similarly. We call the measure $m$ on \(\base[G]\) \emph{invariant} with respect to \((G,\lambda)\) if $m\circ\lambda= m\circ\lambda\inverse$ and the measure \(m\circ\lambda\) on \(G\) is called \emph{symmetric}.
\begin{proposition}\label{prop:pushing_measure}
Let $G$ be a proper groupoid with $G/\base[G]$ paracompact, let $\lambda$ be a Haar system for $G$ and $\pi\colon \base[G] \rightarrow G/\base[G]$ the quotient map.
\begin{enumerate}[label= \emph{\roman*)}]
	\item Let $\mu$ be a measure on $G/\base[G]$ and let $m$ denote the measure $\mu\circ[\lambda]$ on $\base[G]$. Then $m$ is an invariant measure.
	\item Let $m$ be a measure on $\base[G]$. If $m$ is invariant, then there is a measure $\mu$ on $G/\base[G]$ with $\mu\circ [\lambda] = m$.
        \item The measure $\mu$ in \textup{(}ii\textup{)}, with $\mu\circ[\lambda]=m$, is unique.
\end{enumerate}
\end{proposition}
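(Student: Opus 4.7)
\medskip
\noindent\textbf{Proof sketch.} The plan is to observe that (i) and (iii) reduce to direct computations once a cut\nb-off function on \(\base[G]\) is in hand, while (ii) amounts to inverting the computation in (i). The cut\nb-off function I would use is the one produced in the proof of Lemma~\ref{lemma:proper-gpd-has-prob-measures}: a continuous \(\tilde F\colon\base[G]\to\R^+\) with \(\int_{\base[G]}\tilde F\,\dd[\lambda]^{[u]}=1\) for every \(u\in\base[G]\), and with \(\supp(\tilde F)\cap\pi\inverse(K)\) compact for every compact \(K\subseteq G/\base[G]\); the second property guarantees that \((g\circ\pi)\tilde F\in\Contc(\base[G])\) for each \(g\in\Contc(G/\base[G])\).

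For (i), I would unfold \(m\circ\lambda\) using \(m=\mu\circ[\lambda]\). For \(f\in\Contc(G)\), this turns \(\int f\,\dd(m\circ\lambda)\) into the triple integral \(\int_{G/\base[G]}\int_{G^w}\int_{G^{s_G(\eta)}} f(\gamma)\,\dd\lambda^{s_G(\eta)}(\gamma)\,\dd\lambda^w(\eta)\,\dd\mu([w])\). Left invariance of \(\lambda\) would rewrite the inner double integral as an integral of \(f(\eta\inverse\gamma)\) against \(\lambda^w\times\lambda^w\), an expression symmetric under \(\eta\leftrightarrow\gamma\). Carrying out the same unfolding for \(m\circ\lambda\inverse\) lands at the same double integral with \(f(\gamma\inverse\eta)\) in place of \(f(\eta\inverse\gamma)\), and the variable swap identifies the two. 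Fubini is justified throughout by continuity and compact supports.

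For (iii), applying the relation \(\mu\circ[\lambda]=m\) to the test function \((g\circ\pi)\tilde F\) collapses, by the normalisation of \(\tilde F\), to \(\mu(g)=m\bigl((g\circ\pi)\tilde F\bigr)\); the right\nb-hand side depends only on \(m\), which forces uniqueness. For (ii) I would adopt the same formula as a definition: set \(\mu(g):=m\bigl((g\circ\pi)\tilde F\bigr)\), which is positive and well\nb-defined thanks to the support property of \(\tilde F\). The work is then to check \(\mu\circ[\lambda]=m\). For \(f\in\Contc(\base[G])\), I would expand \(\mu([\lambda](f))\) as an integral over \(G\) against \(m\circ\lambda\) of the function \(\eta\mapsto\tilde F(r_G(\eta))\,f(s_G(\eta))\), invoke the hypothesis \(m\circ\lambda=m\circ\lambda\inverse\), and substitute \(\gamma\leftrightarrow\gamma\inverse\) so that the integrand becomes \(\tilde F(s_G(\gamma))\,f(r_G(\gamma))\) against \(m\circ\lambda\); the inner integral \(\int_{G^u}\tilde F\circ s_G\,\dd\lambda^u=1\) now collapses, leaving \(\int f\,\dd m\).

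The main obstacle will be the last step of (ii): the invariance hypothesis on \(m\) has to be applied in precisely the right direction so that the cut\nb-off normalisation triggers afterwards, which is essentially the reverse of the manipulations in (i). The bookkeeping is delicate because \(\tilde F\) lives on \(\base[G]\) while all computations take place on \(G\), and one has to keep track carefully of which base point appears in \(\tilde F\) after each change of variable.
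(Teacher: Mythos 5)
Your proposal is correct and follows essentially the same route as the paper: part (i) by unfolding $m\circ\lambda$ and using left invariance of $\lambda$ together with Fubini, part (ii) by defining $\mu(g)=m\bigl((g\circ\pi)\tilde F\bigr)$ with the same normalised cut-off function built in Lemma~\ref{lemma:proper-gpd-has-prob-measures}, and part (iii) via the surjectivity of $[\Lambda]$ (your test-function argument is just the explicit form of that surjectivity). No gaps.
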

\begin{proof}
(i): Let $f\in \Contc(G)$, then 
\begin{align*}
\int_G f\,\dd(m\circ\lambda)&=\int_G f\,\dd(\mu\circ[\lambda]\circ\lambda)=\int_{G/\base[G]}\int_{\base[G]} \Lambda(f)(u)\;\dd[\lambda]^{[u]}(u)\,\dd\mu([u])\\
&=\int_{G/\base[G]}\int_{G^u} \Lambda(f)\circ s_G(\gamma)\;\dd\lambda^{u}(\gamma)\,\dd\mu([u])\\
&=\int_{G/\base[G]}\int_{G^u}\left(\int_{G^{s_G(\gamma)}} f(\eta)\,\dd\lambda^{s_G(\gamma)}(\eta)\right)\dd\lambda^{u}(\gamma)\,\dd\mu[u].
\end{align*}
We know that $s_G(\gamma)=r_G(\gamma\inverse)$. Now change the variable $\eta\mapsto\gamma\inverse\eta$, and use the left invariance of $\lambda$ to see that the previous term equals
\begin{align*}
\int_{G/\base[G]}\int_{G}\int_G f(\gamma\inverse\eta)\,\dd\lambda^{r_G(\gamma)}(\eta)\dd\lambda^u(\gamma)\,\dd\mu([u]).
\end{align*}
We have removed the superscripts of $G$ in above equation for simplicity. Now apply Fubini's theorem to $\dd\lambda^{r_G(\gamma)}(\eta)\,\dd\lambda^u(\gamma)$ which is allowed since $r_G(\gamma)=u$ and $f$ is compactly supported continuous function. Moreover, note that $u=r_G(\gamma)=s_G(\gamma\inverse)$, use the right invariance of \(\lambda\inverse\) and compute further:
\begin{align*}
&\int_{G/\base[G]}\int_{G}\int_G f(\gamma\inverse\eta)\,\dd\lambda^{s_G(\gamma\inverse)}(\gamma)\,\dd\lambda^u(\eta)\,\dd\mu([u])\\
&=\int_{G/\base[G]}\int_{G}\int_G f(\gamma\inverse)\,\dd\lambda^{s_G(\eta)}(\gamma)\,\dd\lambda^u(\eta)\,\dd\mu([u]) \\
&=\int_{G/\base[G]}\int_{G} \Lambda\inverse(f)\circ s_G(\eta)\,\dd\lambda^u(\eta)\,\dd\mu([u])\\
&=\int_{G/\base[G]} f\,\dd(\mu\circ[\lambda]\circ\lambda\inverse)=\int_G f\,\dd(m\circ\lambda\inverse).
\end{align*}
To be precise, in the last the first equality is obtained by using the right invariance of \(\lambda\inverse\).

\noindent (ii): Let $e$ be a function on $\base[G]$ which is similar to function $F/h$ in Lemma~\ref{lemma:proper-gpd-has-prob-measures}, and thus $\Lambda(e\circ s_G)=1$. Function $e\circ s_G$ is like function $F'$ in Equation~\ref{eq:cut-off-function}. Let $\mu$ be the measure on $G/\base[G]$ which is defined as $\mu(g) = m((g\circ\pi)\cdot e )$ for $g \in \Contc(G/\base[G])$. Let $[\Lambda]$ denote the integration function corresponding to $[\lambda]$. For $f\in \Contc(\base[G])$
\begin{align*}
\int_{\base[G]} f\,\dd(\mu\circ[\lambda])&= \int_{G/\base[G]} [\Lambda](f)([u])\,\dd\mu([u])= \int_{\base[G]} [\Lambda](f)\circ\pi(u) \,e(u)\,\dd m(u)\\
&= \int_{\base[G]}\int_{G} f\circ s_G(\gamma)\, e(r_G(\gamma))\;\dd\lambda^{u}(\gamma)\,\dd m(u).
\end{align*}
We change $\gamma\mapsto \gamma\inverse$, then use the symmetry of the measure $m\circ\lambda$ and continue the computation:
\[
 \int_{\base[G]}\int_{G} f\circ r_G(\gamma)\, e(s_G(\gamma))\;\dd\lambda^{u}(\gamma)\,\dd m(u)=\int_{\base[G]} f(u)\,\Lambda(e\circ s_G)(\gamma)\,\dd m(u)
=\int f\,\dd m.
\]
The last equality is due to the property of $e$ that $\Lambda(e\circ s_G)=1$.

\noindent (iii): Let $\mu'$ be another measure on $G/\base[G]$ which satisfies the
 condition $\mu'\circ[\lambda]=m$. Since the integration map $[\Lambda]\colon \Contc(\base[G])\to \Contc(G/\base[G])$ is surjective, $\mu\circ[\lambda] = \mu'\circ[\lambda]$ implies $\mu=\mu'$.
\end{proof}

Now we study the case when the measure $m$ is not invariant, but \emph{strongly quasi-invariant}; $m$ is called \emph{quasi-invariant} with respect to \((G,\lambda)\) if $m \circ \lambda \sim m \circ\lambda\inverse$. Following Folland, see~\cite{Folland1995Harmonic-analysis-book}*{Chapter 2, Section 6, page 58}, we call \(m\) \emph{strongly} quasi-invariant with respect to \((G,\lambda)\) if there is a continuous homomorphism $\Delta: G \to \R^+_*$ with $m \circ \lambda = \Delta \cdot (m \circ\lambda\inverse)$, that is, \(m\) is quasi-invariant with respect to \((G,\lambda)\) and the Radon-Nikodym derivative implementing the equivalence of the measures \(m \circ\lambda\) and \(m \circ\lambda\inverse\) is continuous. Very often, when there is no chance of confusion, we drop the phrase `with respect to \((G,\lambda)\)' while talking about a (strongly) quasi-invariant measure. The cohomology theory of groupoids tells us that a homomorphism from $G$ to an abelian group $R$ is same as an $R$\nb-valued 1\nb-cocycle (See~\cite{Holkar2015Construction-of-Corr}*{Section 1}).

Let $(G,\lambda)$ be as in Proposition~\ref{prop:pushing_measure}. Let $m$ be a strongly quasi-invariant measure on $\base[G]$, and let $\Delta$ be the $\R^+_*$\nb-valued continuous $1$\nb-cocycle which implements the quasi-invariance. Then $\Delta$ gives an $\R$\nb-valued 1\nb-cocycle $\log \circ \Delta: G \rightarrow \R$. Proposition~\ref{prop:proper_gpd_first_cohom_zero} says that $\log \circ\, \Delta = \underline{b} \circ s_G - \underline{b}\circ r_G$ for some continuous function $\underline{b}: \base[G] \rightarrow \R$. Thus
\[
\Delta = \frac{ \e^{\underline{b} \circ s_G}}{\e^{\underline{b} \circ r_G}}  .
\]
Write $b = \e^{\underline{b}}$, then $b>0$ and it can be checked that
\[
\Delta
= \frac{ \e^{\underline{b} \circ s_G}}{\e^{\underline{b} \circ r_G}}
=\frac{ \e^{\underline{b}} \circ s_G}{\e^{\underline{b}}\circ r_G}
= \frac{ b \circ s_G}{b\circ r_G}.
\]
Rewriting the definition of \((G,\lambda)\)\nb-quasi-invariance of $m$ using the above value of $\Delta$ gives that $m \circ \lambda = \left(\frac{ b \circ s_G}{b\circ r_G} \right) m \circ \lambda\inverse$ which is equivalent to $(b\circ r_G) (m \circ \lambda) = (b \circ s_G)(m \circ\lambda\inverse)$. A straightforward calculation shows that $(b\circ r_G) (m \circ \lambda) = (bm)\circ \lambda$ and $(b\circ s_G)(m \circ \lambda\inverse) = (bm)\circ \lambda\inverse$. 
Thus we get
\begin{proposition}\label{prop:weak_equ_implies_equ}
Let $(G,\lambda)$ be a locally compact proper groupoid with a Haar system. Assume that $G/\base[G]$ is paracompact. Let $m$ be a strongly \((G,\lambda\))\nb-quasi-invariant measure on $\base[G]$. Let $\Delta$ be the $\R^+_*$\nb-valued continuous $1$\nb-cocycle which implements the quasi-invariance. Then there is a continuous function $b: \base[G] \to \R^+$ with 
\begin{enumerate}[label=\roman*)]
\item $\frac{b\circ s_G(\gamma)} {b\circ r_G(\gamma)} = \Delta(\gamma)$ for all $\gamma\in G$;
\item  the measure $bm$ on $\base[G]$ is \((G,\lambda)\)\nb-invariant, that is, $bm\circ\lambda=bm\circ\lambda\inverse$.
\end{enumerate}
\end{proposition}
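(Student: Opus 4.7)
The plan is to exploit the vanishing of the first $\R$-valued cohomology of a proper groupoid (Proposition~\ref{prop:proper_gpd_first_cohom_zero}), reducing the multiplicative cocycle $\Delta$ to an additive one via the logarithm. Since $\Delta\colon G \to \R^+_*$ is a continuous homomorphism (equivalently, a continuous $\R^+_*$-valued 1-cocycle), composing with $\log$ yields a continuous $\R$-valued 1-cocycle $\log\circ\Delta\colon G\to\R$. The hypotheses on $(G,\lambda)$ are exactly those of Proposition~\ref{prop:proper_gpd_first_cohom_zero}, so this cocycle is a coboundary: there exists a continuous $\underline{b}\colon \base[G]\to\R$ with
\[
\log\circ\,\Delta \;=\; \underline{b}\circ s_G - \underline{b}\circ r_G.
\]
Set $b\defeq \e^{\underline{b}}\colon \base[G] \to \R^+$; this is continuous and strictly positive.

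Property (i) is then immediate by exponentiating the coboundary equation: $\Delta(\gamma) = \e^{\underline{b}(s_G(\gamma))-\underline{b}(r_G(\gamma))} = (b\circ s_G)(\gamma)/(b\circ r_G)(\gamma)$ for every $\gamma\in G$.

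For property (ii), I would rewrite the strong quasi-invariance $m\circ\lambda = \Delta\cdot(m\circ\lambda\inverse)$ using the formula from (i), obtaining
\[
(b\circ r_G)\,(m\circ\lambda)\;=\;(b\circ s_G)\,(m\circ\lambda\inverse).
\]
The remaining observation, which I view as the only real calculation here, is that the left-hand side equals $(bm)\circ\lambda$ and the right-hand side equals $(bm)\circ\lambda\inverse$. This is essentially a Fubini/unfolding check: for $f\in\Contc(G)$,
\[
\int_G f\,\dd\bigl((bm)\circ\lambda\bigr) = \int_{\base[G]} b(u)\!\int_{G^u}\! f(\gamma)\,\dd\lambda^u(\gamma)\,\dd m(u)
\]
and since $r_G(\gamma)=u$ for $\gamma\in G^u$, the factor $b(u)$ may be rewritten as $(b\circ r_G)(\gamma)$ and pulled inside the inner integral, giving $\int_G (b\circ r_G)\,f\,\dd(m\circ\lambda)$. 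The analogous identity for $\lambda\inverse$ with $b\circ s_G$ uses that $\gamma\inverse\in G_u$ iff $\gamma\in G^u$, so that $s_G(\gamma\inverse)=r_G(\gamma)=u$, and again $b(u)=(b\circ s_G)(\gamma\inverse)$ enters correctly. Combining these two identities with the quasi-invariance relation yields $bm\circ\lambda = bm\circ\lambda\inverse$, which is (ii).

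I do not anticipate any serious obstacle: the conceptual content is entirely carried by Proposition~\ref{prop:proper_gpd_first_cohom_zero}, and the remaining work is the bookkeeping that identifies $(b\circ r_G)(m\circ\lambda)$ with $(bm)\circ\lambda$ (and the symmetric identity). The only point requiring a moment of care is ensuring $r_G(\gamma)$ is used on the $\lambda$-side and $s_G(\gamma)$ on the $\lambda\inverse$-side — the mismatched indexing is exactly why the cohomological splitting $\Delta = (b\circ s_G)/(b\circ r_G)$ is the correct form to look for in the first place.
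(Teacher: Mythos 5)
Your proposal is correct and follows exactly the route the paper takes: apply Proposition~\ref{prop:proper_gpd_first_cohom_zero} to the additive cocycle $\log\circ\Delta$, set $b=\e^{\underline{b}}$, and then rewrite the strong quasi-invariance as $(b\circ r_G)(m\circ\lambda)=(b\circ s_G)(m\circ\lambda\inverse)$, identifying these with $(bm)\circ\lambda$ and $(bm)\circ\lambda\inverse$. The only difference is that you spell out the Fubini/unfolding check that the paper dismisses as ``a straightforward calculation''.
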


\subsection{Composition of topological correspondences}
\label{subs:composition}
Let $(X, \alpha)$ and $(Y, \beta)$ be
correspondences from $(G_1,\chi_1)$ to
$(G_2,\chi_2)$ and from $(G_2,\chi_2)$ to $(G_3,\chi_3)$,
respectively. Let $\Delta_1$ and $\Delta_2$ be the adjoining functions of $(X, \alpha)$ and $(Y, \beta)$, respectively. Additionally, assume that $X$ and $Y$ are Hausdorff and second countable. We draw Figure~\ref{fig:composition-details} that comprises of this data.
\begin{figure}[htb]
  \centering
\[
\begin{tikzcd}[column sep=small]
 & X \arrow[dashrightarrow]{dl}[description]{\Delta_1}\arrow[dashrightarrow]{dr}{\alpha}&  &Y \arrow[dashrightarrow]{dl}[description]{\Delta_2}\arrow[dashrightarrow]{dr}{\beta}\\
  (G_1,\chi_1) \arrow{rr} &                        & (G_2,\chi_2)\arrow{rr}  &                & (G_3,\chi_3)
\end{tikzcd}
\]  
\caption{}
\label{fig:composition-details}
\end{figure}

We need to create a $G_1$-$G_3$\nb-bispace $\Omega$ equipped with a $G_3$\nb-invariant continuous family of measures $\mu=\{\mu_u\}_{u\in\base_3}$ with each $\mu_u$ $G_1$\nb-quasi-invariant. And the $\Cst(G_1,\chi_1)$\nb-$\Cst(G_3,\chi_3)$\nb-Hilbert module $\Hils(\Omega)$ should be isomorphic to the Hilbert module $\Hils(X) \hat\otimes_{\Cst(G_2,\chi_2) }\Hils(Y)$. 

Denote the fibred product $X\times_{\base[G_2]}Y$ by $Z$. Then $Z$ carries the diagonal action of $G_2$. Since the  action of $G_2$ on $X$ is proper, its action on $Z$ is proper. Thus the the transformation groupoid $Z\rtimes G_2$ is proper. We define the space $\Omega=Z/ G_2=\base[(Z\rtimes G_2)]/(Z\rtimes G_2)$. Since $Z$ is locally compact, Hausdorff and second countable, so is $\Omega$. Being a locally compact, Hausdorff and second countable space, $\Omega$ is paracompact.

The following discussion in this section goes through under a milder hypothesis, namely, $X$ and $Y$ are locally compact Hausdorff and $\Omega$ is paracompact.
\begin{observation}
  \label{obs:omega-is-G1-G3-bispace}
The space $Z$ is a $G_1$-$G_3$-bispace. The momentum maps are
$r_{Z}(x,y)=r_X(x)$ and $s_{Z}(x,y)=s_Y(y)$. For $(\gamma_1, (x,y))\in G_1\times_{\base[G_1]}Z$ and $((x,y),\gamma_3)\in Z\times_{\base[G]_3} G_3$, the actions are $\gamma_1\cdot (x,y)=(\gamma_1x,y)$ and $(x,y)\cdot\gamma_3=(x,y\gamma_3)$, respectively. These actions descend to $\Omega$ and make it a $G_1$\nb-$G_3$\nb-bispace. Thus $r_\Omega([x,y])=r_X(x)$ and $s_\Omega([x,y])=s_Y(y)$ and $\gamma_1[x,y]\gamma_2=[\gamma_1x,y\gamma_2]$ for appropriate $\gamma_1\in G_1, [x,y]\in \Omega $ and $\gamma_2$.
\end{observation}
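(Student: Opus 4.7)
The plan is to verify each component of the asserted $G_1$-$G_3$-bispace structure on $Z$ first, and then check that everything descends cleanly to $\Omega$ along the quotient map.

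First I would set up $Z = X \times_{s_X, \base[G_2], r_Y} Y$ as a locally compact Hausdorff space (this is immediate from the hypotheses on $X$ and $Y$), and define $r_Z(x,y) = r_X(x)$ and $s_Z(x,y) = s_Y(y)$; these are continuous because $r_X$ and $s_Y$ are. Next I would define the candidate $G_1$-action by $\gamma_1 \cdot (x,y) = (\gamma_1 x, y)$, with domain $G_1 \times_{s_{G_1}, \base[G_1], r_Z} Z$. Well-definedness requires $(\gamma_1 x, y) \in Z$, that is, $s_X(\gamma_1 x) = r_Y(y)$; but $s_X$ is $G_1$-invariant on $X$, so $s_X(\gamma_1 x) = s_X(x) = r_Y(y)$, as needed. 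The axioms of a groupoid action, $r_Z(\gamma_1(x,y)) = r_{G_1}(\gamma_1)$ and associativity, follow coordinate-wise from the $G_1$-action on $X$. The construction for the right $G_3$-action, $(x,y) \cdot \gamma_3 = (x, y\gamma_3)$, is entirely symmetric, using $G_3$-invariance of $r_Y$.

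Next I would check that the $G_1$- and $G_3$-actions commute, which is immediate since they act on different coordinates and the individual actions on $X$ and $Y$ are already known to commute with $s_X$ and $r_Y$ respectively. This exhibits $Z$ as a $G_1$-$G_3$-bispace.

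To descend to $\Omega = Z/G_2$, I would observe that the $G_2$-action on $Z$ is $(x,y)\cdot \gamma_2 = (x\gamma_2, \gamma_2\inverse y)$, and the crucial fact is that this commutes with both the $G_1$-action and the $G_3$-action on $Z$. This is clear coordinate-wise: a left $G_1$-multiplication on $x$ commutes with a right $G_2$-multiplication on $x$ by associativity of the commuting $G_1$- and $G_2$-actions on $X$, and it does not touch $y$; symmetrically for $G_3$. Hence the $G_1$- and $G_3$-actions on $Z$ pass to well-defined continuous actions on $\Omega$ via $\gamma_1[x,y] = [\gamma_1 x, y]$ and $[x,y]\gamma_3 = [x, y\gamma_3]$. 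Similarly, $r_X$ and $s_Y$ are $G_2$-invariant on $Z$ (since $r_X(x\gamma_2) = r_X(x)$ and $s_Y(\gamma_2\inverse y) = s_Y(y)$), so they induce continuous maps $r_\Omega, s_\Omega \co \Omega \to \base[G_1], \base[G_3]$, which then serve as the momentum maps.

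The only mild subtlety is to know that $\Omega$ is genuinely the quotient of a bispace rather than just a quotient set; once the action of $G_2$ is verified to be proper (as already noted, this follows from properness of the $G_2$-action on $X$), standard facts about proper actions of locally compact groupoids give that the quotient is Hausdorff and locally compact, and the quotient map is open, so continuity of the induced actions and momentum maps is automatic. No part of this is genuinely hard; it is a bookkeeping verification that the coordinate-wise actions respect the fibre product constraint and commute with the diagonal $G_2$-action.
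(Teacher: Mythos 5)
Your verification is correct and is exactly the routine bookkeeping the paper has in mind: the paper states this as an Observation with no proof, relying on the bispace compatibility of the actions on $X$ and $Y$ and the openness/properness of the diagonal $G_2$-action, which is precisely what you check. Nothing is missing.
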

\begin{lemma}
  \label{lemma:composite-is-right-proper}
The right action of $G_3$ on $\Omega$ defined in Observation~\ref{obs:omega-is-G1-G3-bispace} is proper.
\end{lemma}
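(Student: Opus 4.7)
The plan is to verify that the action map $\Phi \co \Omega \times_{s_\Omega, \base[G_3], r_{G_3}} G_3 \to \Omega\times\Omega$, $([x,y], \gamma_3)\mapsto ([x,y], [x, y\gamma_3])$, is proper. Since $X$ and $Y$ are locally compact, Hausdorff and second countable, so are $Z = X\times_{\base[G_2]} Y$ and, because the quotient map $\pi\co Z\to \Omega$ is open, $\Omega$ itself. In this second countable locally compact Hausdorff setting, properness of $\Phi$ may be verified sequentially: it suffices to show that whenever $[x_n,y_n]\to p$ and $[x_n, y_n\gamma_{3,n}]\to q$ in $\Omega$, the sequence $\gamma_{3,n}$ admits a convergent subsequence in $G_3$.

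The central input is openness of $\pi$, which holds because $G_2$ carries a Haar system (so its source and range maps are open, whence the orbit map of the $G_2$-action on $Z$ is open). Combined with first countability of $Z$, this gives the standard sequence-lifting: given $p_n\to p$ in $\Omega$, any $z\in\pi^{-1}(p)$, and any representatives $(x_n,y_n)\in\pi^{-1}(p_n)$, there exist $\eta_n\in G_2$ such that $(x_n,y_n)\cdot\eta_n = (x_n\eta_n, \eta_n\inverse y_n) \to z$ in $Z$, using the formula for the diagonal $G_2$-action on $Z$ recorded in Equation~\eqref{eq:prel-2}.

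Apply this twice. Choose $(x,y)\in\pi^{-1}(p)$ and produce $\eta_n\in G_2$ with $(x_n\eta_n, \eta_n\inverse y_n)\to (x,y)$; then choose $(x',y')\in\pi^{-1}(q)$ and produce $\eta'_n\in G_2$ with $(x_n\eta'_n, (\eta'_n)\inverse y_n\gamma_{3,n})\to (x',y')$ (using $(x_n, y_n\gamma_{3,n})$ as representatives of $[x_n, y_n\gamma_{3,n}]$). Set $\zeta_n\defeq \eta_n\inverse \eta'_n \in G_2$; this composition is well-defined because $r_{G_2}(\eta'_n)=s_X(x_n)=r_{G_2}(\eta_n)$. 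A direct check gives $(x_n\eta_n)\zeta_n = x_n\eta'_n$, so $(x_n\eta_n)\zeta_n\to x'$ while $x_n\eta_n\to x$. Properness of the $G_2$-action on $X$---part of the hypothesis on $(X,\alpha)$---then yields a subsequence along which $\zeta_n\to \zeta\in G_2$. By continuity, $(\eta'_n)\inverse y_n = \zeta_n\inverse(\eta_n\inverse y_n)\to \zeta\inverse y$, while $(\eta'_n)\inverse y_n\gamma_{3,n}\to y'$ by construction. Properness of the $G_3$-action on $Y$, from the hypothesis on $(Y,\beta)$, applied to the pair $((\eta'_n)\inverse y_n,\gamma_{3,n})$, now produces a convergent subsequence of $\gamma_{3,n}$, as required.

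The principal obstacle is bookkeeping: one must introduce two separate $G_2$-adjustments $\eta_n$, $\eta'_n$ of the representatives of $[x_n,y_n]$ and $[x_n,y_n\gamma_{3,n}]$, and bridge them with $\zeta_n=\eta_n\inverse\eta'_n$. The crucial algebraic fact underlying this bridge is that the diagonal $G_2$-action and the right $G_3$-action on $Z$ commute, so changing the representative $(x_n, y_n)$ by $\eta_n\in G_2$ does not interfere with the right multiplication by $\gamma_{3,n}$. Once the lifts and $\zeta_n$ are in place, the two built-in properness hypotheses---for $G_2$ acting on $X$ and $G_3$ acting on $Y$---successively supply the convergent subsequences of $\zeta_n$ and of $\gamma_{3,n}$.
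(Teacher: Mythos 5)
Your argument is essentially correct and self-contained, but it takes a genuinely different route from the paper. The paper disposes of this lemma in one line by invoking Lemma~2.33 of Tu's article \cite{Tu2004NonHausdorff-gpd-proper-actions-and-K}, which is a general descent statement: if two commuting actions on $Z$ are both proper (here the diagonal $G_2$\nb-action, proper because the $G_2$\nb-action on $X$ is, and the $G_3$\nb-action through the $Y$\nb-factor, proper because the $G_3$\nb-action on $Y$ is), then the induced action on the quotient by one of them is again proper. Your proof in effect re-derives this special case by hand: the sequence-lifting through the open quotient map $\pi$, the bridge $\zeta_n=\eta_n\inverse\eta'_n$ between the two lifts (whose composability and the identity $(x_n\eta_n)\zeta_n=x_n\eta'_n$ you check correctly), and the successive use of properness of $G_2\curvearrowright X$ and $G_3\curvearrowright Y$ are exactly the ingredients hidden inside Tu's lemma. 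What the citation buys is generality and brevity; what your argument buys is transparency about where each properness hypothesis enters.

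One caveat you should address: the reduction of properness to the extraction of convergent subsequences, both for the map $\Phi$ and for the actions of $G_2$ on $X$ and of $G_3$ on $Y$, requires the groupoids $G_2$ and $G_3$ themselves to be Hausdorff and second countable (so that quasi-compact subsets of $X\times_{\base[G_2]}G_2$ and $\Omega\times_{\base[G_3]}G_3$ are metrizable and sequential arguments apply). The paper only imposes Hausdorffness and second countability on the \emph{spaces} $X$ and $Y$; its locally compact groupoids may fail to be Hausdorff away from $\base[G]$ and need not be second countable, and in that generality sequences do not suffice (one would have to rerun your argument with nets or filters, or fall back on Tu's purely topological lemma). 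Under the common standing assumption that all groupoids are second countable, locally compact and Hausdorff, your proof is complete.
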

\begin{proof}
Follows from~\cite{Tu2004NonHausdorff-gpd-proper-actions-and-K}*{Lemma 2.33}.
\end{proof}

 For each $u \in {\base[G]}_3$ define a measure $m_u$ on the space $Z$ as follows: for $f\in \Contc(Z)$
\[
\int_{Z} f \;\dd m_u = \int_Y\int_X f(x,y) \; \dd\alpha_{r_Y(y)}(x)\; \dd\beta_{u}(y).
\]
\begin{lemma}\label{lemma:m_G_3_family}
The  family of measures $\{m_u \}_{u \in {\base[G]}_3}$ is a 
$G_3$\nb-invariant continuous family of measures on $Z$.
\end{lemma}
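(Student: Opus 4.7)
The plan is to verify three properties: that each $m_u$ is concentrated on the $s_Z$-fibre over $u$, that the family is continuous, and that it is $G_3$-invariant. Fibre concentration is immediate from the iterated-integral definition, since $\beta_u$ lives on $Y_u = s_Y\inverse(u)$ and every point $(x,y)$ picked up by the inner $\alpha_{r_Y(y)}$-integral then satisfies $s_Z(x,y) = s_Y(y) = u$.

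For continuity, I plan to factor the integration through $\Contc(Y)$: first pull back the continuous family $\alpha$ along $r_Y\colon Y \to \base[G_2]$ to obtain a continuous family $y \mapsto \alpha_{r_Y(y)}$ along the projection $\textup{pr}_2\colon Z \to Y$. This is the standard pullback construction for continuous families along a continuous base map, and gives that $y \mapsto \int_X f(x,y) \, \dd\alpha_{r_Y(y)}(x)$ lies in $\Contc(Y)$ for each $f \in \Contc(Z)$. Applying the continuous family $\beta$ then produces a function in $\Contc(\base[G_3])$, yielding continuity (and properness) of the integration map $M\colon \Contc(Z) \to \Contc(\base[G_3])$.

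For $G_3$-invariance, the key point is that the $G_2$- and $G_3$-actions on $Y$ commute, so $r_Y(y\gamma) = r_Y(y)$ whenever $\gamma \in G_3$ and $s_Y(y) = r_{G_3}(\gamma)$. Setting $g(y) \defeq \int_X f(x,y) \, \dd\alpha_{r_Y(y)}(x)$, this identity rewrites the inner integral in $\int_Z f((x,y)\gamma) \, \dd m_{r_{G_3}(\gamma)}(x,y)$ as $g(y\gamma)$, so the whole integral becomes $\int_Y g(y\gamma) \, \dd\beta_{r_{G_3}(\gamma)}(y)$, which collapses to $\int_Y g \, \dd\beta_{s_{G_3}(\gamma)} = \int_Z f \, \dd m_{s_{G_3}(\gamma)}$ by a single application of the $G_3$-invariance of $\beta$. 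The only non-trivial ingredient is the pullback-of-continuous-families statement used in the continuity step; everything else is routine book-keeping, and I expect no serious obstacle in the argument.
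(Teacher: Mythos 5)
Your argument is correct and follows essentially the same route as the paper: the $G_3$\nb-invariance is exactly the routine computation the paper defers to (resting on $r_Y(y\gamma)=r_Y(y)$ and the invariance of $\beta$), and your fibre-concentration remark is immediate. The only organisational difference is in the continuity step, where you invoke the pullback of the continuous family $\alpha$ along $r_Y$ followed by composition with $\beta$, whereas the paper checks continuity on elementary tensors $f\otimes g$ (obtaining $B((A(f)\circ r_Y)\,g)$) and then uses Stone--Weierstra{\ss} density in $\Contc(Z)$; both reduce to the same standard facts about continuous families of measures, so there is no gap.
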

\begin{proof}
It is a routine computation to check that the $G_3$\nb-invariance of the family of measures $\beta$ makes $\{m_u\}_{u\in{\base[G]}_3}$ $G_3$\nb-invariant. The computation is similar to that in Proposition~\ref{prop:mu_u_is_measure}. To check the continuity let $f\in \Contc(X)$ and $g\in \Contc(Y)$, then 
\[
\int_Z f\otimes g\,\dd m_u = B((A(f)\circ r_Y)\;g)(u).
\]
which is in $\Contc(\base[G_3])$. Now use the theorem of Stone-Weierstra{\ss} to see that the set $\{f\otimes g: f\in \Contc(X), g\in \Contc(Y)\}\subseteq \Contc(Z)$ is dense which concludes the lemma.
\end{proof}

 The Haar system $\chi_2$ of $G_2$ induces a Haar system $\chi$ on $Z\rtimes G_2$; for $f\in \Contc(Z\rtimes G_2)$ and $(x,y)\in Z$
\[
\int_{(Z\rtimes G_2)^{(x,y)}} f\,\dd\chi^{(x,y)}\defeq\int_{G^{s_X(x)}_2} f((x,y),\gamma)\,\dd\chi_2^{s_X(x)}(\gamma).
\]
The quotient map $\pi : Z \rightarrow \Omega$ carries the family of measures $[\chi]$; the definition of which is similar to the one in Equation~\ref{eq:def-of-quo-lambda}. We write $\lambda$ instead of $[\chi]$, and $\lambda^\omega$ instead of $[\chi]^\omega$ for all ${\omega\in\Omega}$. Recall from Subsection~\ref{sec:composition-of-corr} that for $f \in \Contc(Z)$ and $\omega=[x,y]\in\Omega$
\begin{equation*}
  \label{eq:middle-measure-family}
\int_{\pi\inverse(\omega)} f\; \dd\lambda^{\omega} \defeq \int_{G_2^{r_Y(y)}} f(x\gamma, \gamma\inverse y) \; \dd\chi_2^{r_Y(y)}(\gamma).
\end{equation*}

We wish to prove that, up to equivalence, $\{ m_u\}_{u\in {\base[G_3]}}$ can be pushed down from $Z$ to $\Omega$ to a $G_3$\nb-invariant family of measures $\{ \mu^u\}_{u\in {\base[G_3]}}$. We use $\lambda$ to achieve this. To be precise, we find a continuous function $b\colon Z\to\R^+$ and a family of measures $\mu$ on $\Omega$ which gives a disintegration $bm=\mu\circ\lambda$. Before we proceed we prove a small lemma.
\begin{lemma}\label{lemma:existance_of_composed_measure}
Let $(X, \alpha)$ and $(Y, \beta)$ be correspondences from $(G_1,\chi_1)$ to $(G_2,\chi_2)$ and from $(G_2,\chi_2)$ to $(G_3,\chi_3)$, respectively, with $\Delta_1$ and $\Delta_2$ as their adjoining functions. Then for each $u \in {\base[G]}_3$ there is a function $b_u$ on $Z$ such that the measure $b_u m_u$ on  on $Z=\base[(Z\rtimes G_2)]$ is an invariant measure with respect to \((Z\rtimes G_2,\chi)\). Furthermore, $b_u$ satisfies the relation $b(x\gamma,\gamma\inverse y)b(x,y)\inverse=\Delta_2(\gamma\inverse, y)$.
\end{lemma}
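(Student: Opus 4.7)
The plan is to apply Proposition~\ref{prop:weak_equ_implies_equ} to the transformation groupoid $Z\rtimes G_2$ equipped with its induced Haar system $\chi$, interpreting $m_u$ as a strongly quasi\nb-invariant measure on the unit space $\base[(Z\rtimes G_2)]=Z$. Since $G_2$ acts properly on $X$, the diagonal action of $G_2$ on $Z$ is proper, so $Z\rtimes G_2$ is a locally compact proper groupoid. Its unit-space quotient is precisely $\Omega=Z/G_2$, which is paracompact as a locally compact Hausdorff second countable space. Thus the standing hypotheses of Proposition~\ref{prop:weak_equ_implies_equ} are met.

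Next, I define $\tilde\Delta_u\co Z\rtimes G_2\to\R^+$ by $\tilde\Delta_u((x,y),\gamma)\defeq \Delta_2(\gamma\inverse,y)$. A short bookkeeping argument using the cocycle identity for $\Delta_2$ on $G_2\ltimes Y$ (together with the composition law $((x,y),\gamma_1)\cdot((x\gamma_1,\gamma_1\inverse y),\gamma_2)=((x,y),\gamma_1\gamma_2)$ in $Z\rtimes G_2$) shows that $\tilde\Delta_u$ is a continuous $\R^+$\nb-valued 1\nb-cocycle.

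The main computational step is to verify that $m_u$ is strongly $(Z\rtimes G_2,\chi)$\nb-quasi\nb-invariant with Radon--Nikodym cocycle $\tilde\Delta_u$, i.e.\ that $m_u\circ\chi=\tilde\Delta_u\cdot(m_u\circ\chi\inverse)$. I first expand both $m_u\circ\chi$ and $m_u\circ\chi\inverse$ as iterated integrals against a test function $F\in\Contc(Z\rtimes G_2)$, using the definitions of $m_u$ and $\chi$. In the expression for $m_u\circ\chi\inverse$ the integrand takes the form $F((x\gamma,\gamma\inverse y),\gamma\inverse)$; I perform the substitution $x\mapsto x\gamma\inverse$ on the fibre $X_{r_Y(y)}$, which is legitimate by the $G_2$\nb-invariance of $\alpha$, and then apply Fubini to interchange the $\dd\chi_2^{r_Y(y)}(\gamma)$ and $\dd\alpha_{r_Y(y)}(x)$ integrations. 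This recasts the resulting double integral in $(\gamma,y)$ in a form to which the quasi\nb-invariance of $\beta_u$, namely condition \textit{iv}) of Definition~\ref{def:correspondence} for $(Y,\beta)$, can be applied; that condition converts an integrand of the shape $G(\gamma,\gamma\inverse y)$ into $G(\gamma\inverse,y)\,\Delta_2(\gamma\inverse,y)\inverse$, which is exactly what is needed to identify the Radon--Nikodym cocycle as $\tilde\Delta_u$.

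Granted the strong quasi\nb-invariance, Proposition~\ref{prop:weak_equ_implies_equ} applied to $(Z\rtimes G_2,\chi)$, $m_u$ and $\tilde\Delta_u$ produces a continuous function $b_u\co Z\to\R^+$ with $b_u\circ s_{Z\rtimes G_2}/b_u\circ r_{Z\rtimes G_2}=\tilde\Delta_u$ and with $b_u m_u$ invariant. Substituting $r_{Z\rtimes G_2}((x,y),\gamma)=(x,y)$ and $s_{Z\rtimes G_2}((x,y),\gamma)=(x\gamma,\gamma\inverse y)$ rewrites the first equality as the required identity $b_u(x\gamma,\gamma\inverse y)\,b_u(x,y)\inverse=\Delta_2(\gamma\inverse,y)$. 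The principal obstacle is the strong quasi\nb-invariance computation itself: combining the $G_2$\nb-invariance of $\alpha$, Fubini and the $\Delta_2$\nb-quasi\nb-invariance of $\beta_u$ without losing track of the direction of inverses, the momentum conditions, or the argument at which $\Delta_2$ is evaluated is the only delicate point; everything else is then a direct appeal to Proposition~\ref{prop:weak_equ_implies_equ}.
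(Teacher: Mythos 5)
Your proposal is correct and follows essentially the same route as the paper: establish that $m_u$ is strongly $(Z\rtimes G_2,\chi)$\nb-quasi\nb-invariant with continuous Radon--Nikodym cocycle $\Delta((x,y),\gamma)=\Delta_2(\gamma\inverse,y)$ (using the $G_2$\nb-invariance of $\alpha$ and the quasi\nb-invariance of $\beta_u$), and then invoke Proposition~\ref{prop:weak_equ_implies_equ} for the proper groupoid $Z\rtimes G_2$ to produce $b_u$ with $b_u\circ s_{Z\rtimes G_2}/b_u\circ r_{Z\rtimes G_2}=\Delta$ and $b_u m_u$ invariant. The only cosmetic difference is that the paper performs the change of variables $((x,y),\gamma)\mapsto((x\gamma,\gamma\inverse y),\gamma\inverse)$ in a single step, whereas you split it into the fibrewise substitution, Fubini, and the $(Y,\beta)$ quasi\nb-invariance condition.
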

 We work with a single $\mu_u$ at a time, so we prefer to drop the suffix $u$ of $b_u$ and simply write $b$. Using Lemma~\ref{lemma:existance_of_composed_measure} we define the function $\Delta\colon Z\times_{\base[G_2]}G_2\to\R^+$ as $\Delta((x,y),\gamma)=\Delta_2(\gamma\inverse, y)$.
\begin{proof}
The proof follows the steps below:
\begin{enumerate}[label=\roman*)]
\item Firstly, we show that for each $u\in{\base[G]}_3$, $m_u$ is strongly quasi-invariant with respect to \((Z\rtimes G_2,\chi)\) and $\Delta((x,y),\gamma)\defeq\Delta_2(\gamma\inverse, y)$ is the cocycle which implements the quasi-invariance.
\item Since $ Z\rtimes G_2$ is proper, we appeal to Proposition~\ref{prop:weak_equ_implies_equ} to get a function $b\colon Z=\base[(Z\rtimes G_2)]\to \R^+$ having the desired properties.
\end{enumerate}
\noindent (i):  We draw Figure~\ref{fig:pushig-measure-down} which is similar to Figure~\ref{fig:symmetric-measures-1}.
  \begin{figure}[htb]
    \centering
\[\begin{tikzcd}
 Z\rtimes G_2 \arrow{r}{\chi\inverse}[swap]{s_{Z\rtimes G_2}} \arrow{d}[swap]{\chi}{r_{Z\rtimes G_2}}
 & Z \arrow{d}{\lambda}[swap]{\pi} \\
 Z \arrow{r}{\lambda}[swap] {\pi} 
& \Omega
 \end{tikzcd}\] 
\caption{}
\label{fig:pushig-measure-down}
  \end{figure}
Let $f \in \Contc(Z\rtimes G_2)$, then
\begin{align*}
\int_{Z\rtimes G_2} f\,\dd(m_u\circ \chi) &= \int_{Z} \int_{G_2} f((x,y), \gamma) \; \dd\chi_2^{s_X(x)}(\gamma)\, \dd m_u(x,y) \\
&= \int_Y\int_X \int_{G_2} f((x,y), \gamma) \; \dd\chi_2^{s_X(x)}(\gamma)\, \dd\alpha_{r_Y(y)}(x)\, \dd\beta_u(y).
\end{align*}
Change variable $((x,y),\gamma)\mapsto((x\gamma,\gamma\inverse y),\gamma\inverse)$. Then use the fact that the family measures $\alpha$ is $G_2$\nb-invariant and each measures in $\beta$ is $G_2$\nb-quasi-invariant to see that the previous term equals
\begin{align*}
\int_Y\int_X \int_{G_2} f((x\gamma,\gamma\inverse y),\gamma\inverse)\,\Delta_2(\gamma, \gamma\inverse y)
\;\,\dd\chi_2^{s_X(x)}(\gamma)\,\dd\alpha_{r_Y(y)}(x) \,\dd\beta_u(y).
\end{align*}
The function 
\[
\Delta\colon Z\rtimes G_2\to \R^+,\quad \Delta((x, y),\gamma)= \Delta_2(\gamma\inverse, y),
\]
is clearly continuous. Furthermore, $\Delta$ is an $\R^+_*$\nb-valued 1\nb-cocycle; for a composable pair $((x,y),\gamma),((x\gamma,\gamma\inverse y),\eta)\in Z\rtimes G_2$ a small routine computation shows that
\[
\Delta((x,y),\gamma)\Delta((x\gamma,\gamma\inverse y),\eta)=\Delta((x,y),\gamma\eta).
\] 
Thus $\Delta$ implements the \((Z\rtimes G_2,\chi)\)\nb-quasi-invariance of the measure $m_u$.

\noindent (ii): Since $ Z\rtimes G_2$ is a proper groupoid, we apply Proposition~\ref{prop:weak_equ_implies_equ} which gives a function $b\colon Z\to \R^+_*$ such that $bm_u$ is an invariant measure on $Z$ (with respect to \((Z\rtimes G_2,\chi)\)). The function $b$ also satisfies the relation $b\circ s_{Z\rtimes G_2}/b\circ r_{Z\rtimes G_2}=\Delta$, that is, $b(x\gamma,\gamma\inverse y)b(x,y)\inverse=\Delta((x,y),\gamma)$ for all $((x,y),\gamma)\in  Z\rtimes G_2$.
\end{proof}
\begin{remark}\label{rem:invariance-of-Delta}
For the cocycle $\Delta\colon Z\rtimes G_2\to \R^+_*$,
$\Delta((x,y),\gamma))=
\Delta_2(\gamma\inverse, y)$, we observe:
  \begin{enumerate}[label=\roman*), leftmargin=*]
  \item since $\Delta$ does not depend on $x$, $\Delta$ is $G_1$\nb-invariant;
  \item $\Delta_2$ is $G_3$\nb-invariant (\cite{Holkar2015Construction-of-Corr}*{Remark 2.5}). Hence 
    $\Delta(((x,y),\gamma)\gamma_3)=\Delta_2(\gamma\inverse, y\gamma_3)=\Delta_2(\gamma\inverse, y)=\Delta((x,y),\gamma)$ for all $((x,y),\gamma)\in  Z\rtimes G_2$ and appropriate $\gamma_3\in G_3$.
  \end{enumerate} 
 Thus $\Delta$ depends only on $\gamma$ and $[y]\in Y/G_3$.
\end{remark}

The function $b$ appearing in
Lemma~\ref{lemma:existance_of_composed_measure} can be computed explicitly. Let $p=\{p^z\}_{z\in Z}$ be a family of probability measures on $ Z\rtimes G_2$ as in Lemma~\ref{lemma:proper-gpd-has-prob-measures}. Then Propositions~\ref{prop:proper_gpd_first_cohom_zero} and~\ref{prop:weak_equ_implies_equ} give 
\begin{equation}
  \label{eq:formula-for-b}
  b(x,y) = (\exp\circ\,\underline{b})(x,y) = \exp\left( \int_{Z\rtimes G_2} \log \circ \Delta((x,y),\gamma)\; \dd p^{(x, y)}((x,y),\gamma) \right).
\end{equation}
This implies that $b$ is a continuous positive function on $Z$.
\begin{remark}\label{rem:invariance-of-b}
  \begin{enumerate}[label=\roman*)]
  \item The $G_1$\nb-invariance of $\Delta$ from Remark~\ref{rem:invariance-of-Delta} along with Equation~\ref{eq:formula-for-b} imply that $b$ is $G_1$\nb-invariant.
  \item The $G_3$\nb-invariance of $\Delta$ (Remark~\ref{rem:invariance-of-Delta} and Equation~\ref{eq:formula-for-b}) implies that $b$ is $G_3$-invariant. Indeed, for composable $((x,y), \gamma_3)\in Z\times G_3$
    \begin{multline*}
b(x,y\gamma_3)=b((x,y)\gamma_3) = \exp \left( \int \log \circ \Delta((x, y\gamma_3),\gamma)\; \dd p^{(x,y\gamma_3)}((x,y\gamma_3),\gamma) \right)\\=\exp \left( \int \log \circ \Delta((x, y\gamma_3),\gamma)\,F'((x, y),\gamma)\; \dd \chi_2^{r_Y(y\gamma_3)}(\gamma) \right) 
    \end{multline*}
 where $F'$ is a function as in Equation~\ref{eq:cut-off-function} for groupoid $Z\rtimes G_2$ used to get the family of probability measures $p$. The $G_3$ invariance of $\Delta$ and the fact that $r_Y(y\gamma_3)=r_Y(y)$ give that the previous term equals
\[\exp \left( \int \log \circ \Delta((x, y),\gamma)\,F'((x, y),\gamma)\; \dd \chi_2^{r_Y(y)}(\gamma) \right)=b(x,y).\]
The last equality is obtained from a computation similar to the one we started with, but in reverse order.
  \end{enumerate}
\end{remark}
\begin{remark}
  \label{rem:formula-for-mu-u}
Once we have $bm_u\circ\chi=bm_u\circ\chi\inverse$, (ii)
of Proposition~\ref{prop:pushing_measure} gives a measure $\mu_u$ on~$\Omega$ with $bm_u=\mu_u\circ\lambda$. And, as we shall see, $\{\mu_u\}_{u\in G_3}$ is
the required family of measures. For $f\in \Contc(\Omega)$
  \begin{multline}
    \label{eq:def-of-mu}
 \int_\Omega f\,\dd\mu_u=\int_Z (f\circ\pi)\cdot e\cdot\,b\,\dd m_u\\=
\int_Y\int_X f\circ\pi(x,y) e(x,y)\,b(x,y)\,\dd\alpha_{r_Y(y)}(x)\,\dd\beta_u(y)   
  \end{multline}
where $\pi\colon Z\to\Omega$ is the quotient map, and $e$ is the function on $Z$ with $\int e\circ s_{Z\rtimes G_2}\,\dd\chi^z = 1$ for all $z\in Z$. In the discussion that follows, the letter $e$ will always stand for such a function. Due to~{(iii)} of Proposition~\ref{prop:pushing_measure} the measure $\mu_u$ is
independent of the choice of the function $e$.
\end{remark} 

Recall that $\Omega$ is a $G_1$\nb-$G_3$\nb-bispace (Observation~\ref{obs:omega-is-G1-G3-bispace}) and the action of $G_3$ is proper (Lemma~\ref{lemma:composite-is-right-proper}). 
\begin{proposition}\label{prop:mu_u_is_measure}
The family of measures $\{\mu_u\}_{u\in{\base[G]}_3}$ is a\; $G_3$\nb-invariant continuous family of measures on $\Omega$ along the momentum map $s_\Omega$.
\end{proposition}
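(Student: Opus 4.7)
The plan is to verify three facts: (a) each $\mu_u$ is a Radon measure concentrated on $s_\Omega\inverse(u)$; (b) the map $u\mapsto\int f\,\dd\mu_u$ lies in $\Contc(\base[G_3])$ for every $f\in\Contc(\Omega)$; (c) the family is $G_3$\nb-invariant. Claim (a) is immediate from the formula in Remark~\ref{rem:formula-for-mu-u}, since the outer measure $\beta_u$ is supported on $Y_u=s_Y\inverse(u)$ and $\pi$ sends such $(x,y)$ into $s_\Omega\inverse(u)$; positivity and linearity of $\mu_u$ as a functional on $\Contc(\Omega)$ are manifest from the same formula.

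For claim (b), I first observe that $(f\circ\pi)\cdot e\cdot b$ belongs to $\Contc(Z)$: its support lies in $\pi\inverse(\supp f)\cap\supp e$, which is compact by the cut\nb-off property of $e$ coming from Lemma~\ref{lemma:cutoff-function-for-equ-relation}. Then $\alpha$ furnishes a proper continuous family of measures along the second projection $Z\to Y$ via $y\mapsto\alpha_{r_Y(y)}$, and composing this with $\beta$ along $s_Y\colon Y\to\base[G_3]$ yields a map $\Contc(Z)\to\Contc(\base[G_3])$ whose value on $(f\circ\pi)\cdot e\cdot b$ is exactly $u\mapsto\int f\,\dd\mu_u$, giving both continuity and compact support in $u$.

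The main work is claim (c). Rather than trying to force $e$ itself to be $G_3$\nb-invariant, I will exploit the uniqueness part \textup{(iii)} of Proposition~\ref{prop:pushing_measure}. Fix $\gamma\in G_3$ with $r_{G_3}(\gamma)=u$ and $s_{G_3}(\gamma)=v$, let $R_\gamma\colon \Omega_u\to\Omega_v$ be the right action $\omega\mapsto\omega\gamma$, and set $\nu_v\defeq(R_\gamma)_*\mu_u$ on $\Omega_v$; the identity $\nu_v=\mu_v$ is precisely the $G_3$\nb-invariance I want, so by uniqueness of the disintegration it suffices to prove $\nu_v\circ\lambda=bm_v=\mu_v\circ\lambda$. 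A short direct check on the explicit formula for $\lambda^\omega$, using the $G_3$\nb-invariance $r_Y(y\gamma)=r_Y(y)$, yields the transfer identity $\lambda^{\omega\gamma}(f)=\lambda^\omega(f\circ R_\gamma^Z)$ for $f\in\Contc(Z)$, where $R_\gamma^Z(x,y)\defeq(x,y\gamma)$. Combining this with the defining disintegration $\mu_u\circ\lambda=bm_u$ produces
\[
\int_Z f\,\dd(\nu_v\circ\lambda)=\int_Z (f\circ R_\gamma^Z)\,b\,\dd m_u,
\]
after which I rewrite the right-hand side using, in order, the $G_3$\nb-invariance of $b$ from Remark~\ref{rem:invariance-of-b}\textup{(ii)}, the $G_3$\nb-invariance of $r_Y$ on $Y$, and finally the $G_3$\nb-invariance of the family $\beta$, to arrive at $\int_Z f\,b\,\dd m_v$. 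The main obstacle will be the careful bookkeeping in this iterated\nb-integral transformation and in verifying the transfer identity for $\lambda$; everything else is a direct application of results established earlier in the paper.
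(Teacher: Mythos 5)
Your proof is correct, and for the invariance part it is organised differently from the paper's. The paper proves $G_3$\nb-invariance by direct computation with the explicit formula $\int_\Omega f\,\dd\mu_u=\int_Z(f\circ\pi)\,e\,b\,\dd m_u$ of Remark~\ref{rem:formula-for-mu-u}: it substitutes, changes the variable $y\gamma\mapsto y$, and uses the $G_3$\nb-invariance of $\beta$ and of $b$; this forces one to track what happens to the cut\nb-off function $e$ under the translation (the translate of $e$ is again a valid cut\nb-off, and Remark~\ref{rem:formula-for-mu-u} guarantees the value of $\mu_u$ does not depend on which one is used). You instead work one level up, at the disintegration identity $bm_u=\mu_u\circ\lambda$ itself: you push $\mu_u$ forward under right translation by $\gamma$, verify the transfer identity $\lambda^{\omega\gamma}(f)=\lambda^{\omega}(f\circ R^Z_\gamma)$, reduce to the same three invariance facts ($b$, $r_Y$, $\beta$) to get $\nu_v\circ\lambda=bm_v$, and then invoke the uniqueness statement~(iii) of Proposition~\ref{prop:pushing_measure}. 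The computational core is the same, but your routing through uniqueness never touches $e$ in the invariance step, which is a cleaner way to handle the point the paper disposes of via the independence-of-$e$ remark. For continuity, your argument (compact support of $(f\circ\pi)\,e\,b$ via the cut\nb-off property, then continuity of the family $m$ from Lemma~\ref{lemma:m_G_3_family}) is a direct verification, whereas the paper factors $M=\mu\circ\Lambda$ and uses that $\Lambda$ is a continuous surjection; both are sound and essentially interchangeable. No gaps.
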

\begin{proof}
We check the invariance first and then check the continuity. Let $f\in
\Contc(\Omega)$ and $\gamma \in G_3$, then
\begin{align*}
&\int_\Omega f([x,y\gamma]) \; \dd\mu_{r_{G_3}(\gamma)}[x,y]\\
 &= \int_Y\int_X f([x, y\gamma]) e(x, y\gamma) b(x, y)\; \,\dd\alpha_{r_Y(y)}(x)\,\dd \beta_{r_{G_3}(\gamma')}(y).
\end{align*}

Change $y\gamma\to y$, then use the $G_3$\nb-invariance of the family
$\beta$ and that of the function $b$ to see that the last term in the above computation equals
\begin{align*}
&\int_Y\int_X f([x, y]) e(x,y) b(x, y)\,\dd\alpha_{r_Y(y)}(x)\,\dd \beta_{s_{G_3}(\gamma)}(y) \\
&=\int_Z f\cdot e \cdot\,b\,\dd m_{s_{G_3}(\gamma)}=\int_\Omega f[x,y] \; \dd\mu_{s_{G_3}(\gamma)}[x,y].
\end{align*}
Thus $\{\mu_u\}_{u\in\base[G_3]}$ is $G_3$\nb-invariant.

Now we check that $\mu$ is a continuous family of measures. Let
$M,\mu$ and $\Lambda$ denote the integration maps which the
families of measures $m,\mu$ and $\lambda$ induce between the
corresponding spaces of continuous compactly supported functions. Remark~\ref{rem:formula-for-mu-u} says that $M: \Contc(Z)\rightarrow \Contc({\base[G]}_3)$ is the
composite of  $\Contc(Z) \xrightarrow{\Lambda} \Contc(\Omega)
\xrightarrow{\mu} \Contc({\base[G]}_3)$, that is, Figure~\ref{fig:cont-of-mu} commutes: 
\begin{figure}[htb]
  \centering
  \begin{tikzcd}[row sep=tiny,scale=0.5]
                         			& \Contc(Z) \arrow{dl}[swap]{\Lambda}\arrow{dd}{M} \\
  \Contc(\Omega) \arrow{dr}[swap]{\mu}  &              \\
						&\Contc({\base[G]}_3)  .
\end{tikzcd}
\caption{}
\label{fig:cont-of-mu}
\end{figure}
Lemma~\ref{lemma:m_G_3_family} shows that $M$ is continuous,
\cite{Holkar2015Construction-of-Corr}*{Example 1.8} shows that $\Lambda$ is continuous and surjective. Hence $\mu$ is continuous.
\end{proof}
The family of measures $\mu$ on $\Omega$ is the required family of
measures for the composite correspondence. We still need to show that
each $\mu_u$ is $G_1$\nb-quasi-invariant. The following computation shows this quasi-invariance and also yields the adjoining function. Let $f\in \Contc(G_1 \times_{\base[G_1]}\Omega)$ and $u\in {\base[G]}_3$, then
\begin{align*}
&\int_\Omega\int_{G_1} f(\eta\inverse, [x,y]) \; \dd\chi_1^{r_\Omega([x,y])}(\eta)\, \dd\mu_u[x,y]\\
 &= \int_Y\int_X\int_{G_1} f(\eta\inverse, [x,y]) \,e(x,y)\, b(x, y)
   \,\dd\chi_1^{r_X(x)}(\eta)\, \dd\alpha_{r_Y(y)}(x) \, \dd\beta_u(y) .
\end{align*}

 Now we change variable $(\eta\inverse, [x,y])\mapsto (\eta, [\eta\inverse x,y])$. Then the $(G_1,\chi_1)$\nb-quasi-invariance of $\alpha$ changes
\[ \dd\chi_1^{r_X(x)}(\eta)\,\dd\alpha_{r_Y(y)}(x)\mapsto
\Delta_1(\eta, \eta\inverse x) \, \dd\chi_1^{r_X(x)}(\eta) \,\dd\alpha_{r_Y(y)}(x).
\]
We incorporate this change and continue the computation further:
\begin{multline*}
\textnormal{R.\,H.\,S.}=\int_Y\int_X\int_{G_1} f(\eta, [\eta\inverse x,y]) \,e(\eta\inverse x, y)\, b(\eta\inverse x, y)\\
\,\Delta_1(\eta, \eta\inverse x)\,\dd\chi_1^{r_X(x)}(\eta)\,\dd\alpha_{r_Y(y)}(x) \, \dd\beta_u(y)\\
=\int_Y\int_X\int_{G_1}f(\eta, [\eta\inverse x,y]) \,\frac{b(\eta\inverse x, y)}{b(x, y)}\, \Delta_1(\eta, \eta\inverse x)\\e(\eta\inverse x, y) b(x, y)\,\dd\chi_1^{r_X(x)}(\eta)\, \dd\alpha_{r_Y(y)}(x)\,\dd\beta_u(y).
\end{multline*}
We transfer the integration on $\Omega$ where the previous term equals
\[
\int_{\Omega}\int_{G_1} f(\eta, [\eta\inverse x,y]) \,\frac{b(\eta\inverse x, y)}{b(x, y)}\,\Delta_1(\eta, \eta\inverse x) \, \dd\chi_1^{r_\Omega([x,y])}(\eta)\, \dd\mu_u[x,y].
\]
Define $\Delta_{1,2} : G_1\ltimes\Omega \to \R^+_*$ by
\begin{equation}
\label{eq:def-delta}
\Delta_{1,2}(\eta, [x,y]) =b(\eta x,y)\inverse \Delta_1(\eta, x) b(x,y),
\end{equation}
then the above computation gives
\begin{multline*}
\int_\Omega\int_{G_1} f(\eta\inverse, [x,y]) \; \dd\chi_1(\eta) \;\dd\mu_u[x,y]\\ =  \int_\Omega\int_{G_1} f(\eta,[\eta\inverse x,y])\;\Delta_{1,2}(\eta, \eta\inverse [x,y]) \; \dd\chi_1(\eta) \;\dd\mu_u[x,y]  
\end{multline*}
for $u \in {\base[G]}_3$. To announce that $\mu_u$ is $G_1$\nb-quasi-invariant and $\Delta_{1,2}$ is the adjoining function, we must check that the function
$\Delta_{1,2}$ is well-defined which the next lemma does.
\begin{lemma}
  \label{lemma:delta-on-composite-is-well-def}
The function $\Delta_{1,2}$ defined in Equation~\eqref{eq:def-delta} is a well-defined $\R^+_*$\nb-valued continuous $1$\nb-cocycle on the groupoid $G_1\ltimes \Omega$.
\end{lemma}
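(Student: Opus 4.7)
The plan is to verify three properties of $\Delta_{1,2}$: that it is well-defined on $G_1\ltimes\Omega$, that it is continuous, and that it satisfies the $1$\nb-cocycle identity. Well-definedness is the substantive point; continuity and the cocycle identity are then routine.

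For well-definedness, I would take two representatives $(x,y)$ and $(x\gamma,\gamma\inverse y)$ of the same class $[x,y]\in\Omega$, with $\gamma\in G_2^{r_Y(y)}$, and show that $b(\eta x,y)\inverse \Delta_1(\eta,x)\, b(x,y)$ is unchanged under this substitution. Two ingredients are needed. First, the adjoining function $\Delta_1$ of $(X,\alpha)$ is $G_2$\nb-invariant; this is the analogue for $(X,\alpha)$ of the $G_3$\nb-invariance of $\Delta_2$ used in Remark~\ref{rem:invariance-of-Delta} (see \cite{Holkar2015Construction-of-Corr}*{Remark 2.5}), so $\Delta_1(\eta, x\gamma) = \Delta_1(\eta, x)$. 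Second, Lemma~\ref{lemma:existance_of_composed_measure} supplies the identity $b(x\gamma,\gamma\inverse y) = \Delta_2(\gamma\inverse, y)\, b(x,y)$. Applied to both $(x,y)$ and $(\eta x,y)$, this produces matching factors of $\Delta_2(\gamma\inverse,y)$ in the numerator and denominator of the proposed expression, which cancel, leaving the original value. Hence the formula descends to $G_1\ltimes\Omega$.

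Continuity follows from the continuity of $\Delta_1$ on $G_1\ltimes X$ and of $b$ on $Z$ (the latter visible from Equation~\eqref{eq:formula-for-b}), together with the openness of $\pi\co Z\to\Omega$: the pullback of $\Delta_{1,2}$ along $\mathrm{id}_{G_1}\times\pi$ is manifestly continuous on $G_1\ltimes Z$, and openness of the quotient map lets this descend. For the $1$\nb-cocycle identity, direct expansion yields
\[
\Delta_{1,2}(\eta_1,\eta_2[x,y])\,\Delta_{1,2}(\eta_2,[x,y]) = b(\eta_1\eta_2 x,y)\inverse \Delta_1(\eta_1,\eta_2 x)\,\Delta_1(\eta_2,x)\,b(x,y)
\]
after the middle factors $b(\eta_2 x,y)\,b(\eta_2 x,y)\inverse$ telescope, and this equals $\Delta_{1,2}(\eta_1\eta_2,[x,y])$ by the cocycle identity of $\Delta_1$.

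The main obstacle is securing the $G_2$\nb-invariance of $\Delta_1$. It follows from uniqueness of the adjoining function together with the $G_2$\nb-equivariance of the data that determine it (the $G_2$\nb-invariance of $\alpha$ and the commutativity of the $G_1$- and $G_2$-actions on $X$); once this is in place the remaining verifications are essentially formal.
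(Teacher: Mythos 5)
Your proposal is correct and follows essentially the same route as the paper: well-definedness via the $G_2$\nb-invariance of $\Delta_1$ together with the relation $b(x\gamma,\gamma\inverse y)=\Delta_2(\gamma\inverse,y)\,b(x,y)$ from Lemma~\ref{lemma:existance_of_composed_measure} applied to both representatives (the paper writes the resulting cancellation as $\Delta_2(\gamma\inverse,y)\Delta_2(\gamma,\gamma\inverse y)=1$), with continuity and the cocycle identity treated as routine consequences of the continuity and multiplicativity of $b$ and $\Delta_1$. Your added remarks on descending continuity through the open quotient map and on why $\Delta_1$ is $G_2$\nb-invariant only make explicit what the paper leaves implicit.
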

\begin{proof}
  Let $(x\gamma,\gamma\inverse y)\in[x,y]$, then 
\[
\Delta_{1,2}(\eta\inverse, [x\gamma,\gamma\inverse y])=b(\eta\inverse x\gamma,\gamma\inverse y)\inverse \Delta_1(\eta\inverse, x\gamma) b(x\gamma,\gamma\inverse y).
\]
We multiply and divide the last term by $b(\eta\inverse x,y)\inverse b(x,y)$, and use the $G_2$\nb-invariance of~$\Delta_1$, then re-write the term as
\begin{align*}
b(\eta\inverse x,y)\inverse \Delta_1(\eta\inverse, x) b(x,y)\,\left(\frac{b(\eta\inverse x,y)}{ b(\eta\inverse x\gamma,\gamma\inverse y)} \,\frac {b(x\gamma,\gamma\inverse y)}{b(x,y)} \right)
\end{align*}
Now use the last claim in Lemma~\ref{lemma:existance_of_composed_measure} which relates $b$ and $\Delta_2$, use the definition of $\Delta_{1,2}$, and compute the above term further:
\begin{align*}
\Delta_{1,2}(\eta\inverse, [x,y])\,\left(\Delta_2(\gamma\inverse,
  y)\Delta_2(\gamma,\gamma\inverse y)\right)=\Delta_{1,2}(\eta\inverse, [x,y]).
\end{align*}
To get the equality above, observe that $(\gamma\inverse, y)\inverse=(\gamma,\gamma\inverse y)$ and use the fact that $\Delta_2$ is a homomorphism.

 Due to the continuity of $b$ and $\Delta_1$, the cocycle $\Delta_{1,2}$ is continuous. Using a computation as above, it can be checked that $\Delta_{1,2}$ is a groupoid homomorphism.
\end{proof}
\begin{proposition}
  \label{prop:mu-is-G-1-quasi-invariant}
The family of measures $\{\mu_u\}_{u\in{\base[G]}_3}$ is $G_1$\nb-quasi-invariant. The adjoining function for the quasi-invariance is given by Equation~\eqref{eq:def-delta}.
\end{proposition}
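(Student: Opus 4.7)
The plan is to argue that the quasi-invariance identity for $\mu_u$ is already encoded in the chain of manipulations performed between Remark~\ref{rem:formula-for-mu-u} and Lemma~\ref{lemma:delta-on-composite-is-well-def}, and that the only remaining matter, namely well-definedness of the proposed adjoining function, has been addressed by that lemma.

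Concretely, I would fix $f\in \Contc(G_1\times_{\base[G_1]}\Omega)$ and $u\in\base[G_3]$, and unfold the integral $\int_\Omega\int_{G_1} f(\eta\inverse,[x,y])\,\dd\chi_1^{r_\Omega([x,y])}(\eta)\,\dd\mu_u[x,y]$ by inserting the explicit formula for $\mu_u$ from Equation~\eqref{eq:def-of-mu}; this produces an iterated integral over $G_1$, $X$ and $Y$ weighted by $e\cdot b$. Then I would perform the change of variable $(\eta\inverse,x)\mapsto(\eta,\eta\inverse x)$ and invoke the $(G_1,\chi_1)$\nb-quasi-invariance of $\alpha$ from the definition of the topological correspondence $(X,\alpha)$, which inserts the factor $\Delta_1(\eta,\eta\inverse x)$. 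Finally I would multiply and divide by $b(x,y)/b(\eta\inverse x,y)$ so as to reassemble $\mu_u$, the remaining ratio being precisely $\Delta_{1,2}(\eta,\eta\inverse[x,y])$ as defined in Equation~\eqref{eq:def-delta}.

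The main obstacle is that $\Delta_{1,2}$, read as a formula on $X\times_{\base[G_2]} Y$, a priori depends on the chosen representative $(x,y)$ of $[x,y]\in\Omega$; one must verify it genuinely descends to a continuous $1$\nb-cocycle on $G_1\ltimes\Omega$. This is precisely the content of Lemma~\ref{lemma:delta-on-composite-is-well-def}, whose proof uses the identity $b(x\gamma,\gamma\inverse y)\,b(x,y)\inverse=\Delta_2(\gamma\inverse,y)$ from Lemma~\ref{lemma:existance_of_composed_measure} together with the cocycle property of $\Delta_2$. With that lemma in hand, the identity obtained from the above computation is exactly condition \emph{iv}) of Definition~\ref{def:correspondence}, so $\mu_u$ is $(G_1,\chi_1)$\nb-quasi-invariant with adjoining function $\Delta_{1,2}$. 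Since uniqueness of the adjoining function is already recorded just after Definition~\ref{def:correspondence}, no further verification is required.
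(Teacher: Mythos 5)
Your proposal is correct and follows essentially the same route as the paper: the paper's proof of this proposition is simply an appeal to the computation carried out immediately before it (unfolding $\mu_u$ via Equation~\eqref{eq:def-of-mu}, changing variables, invoking the $(G_1,\chi_1)$\nb-quasi-invariance of $\alpha$ to produce $\Delta_1$, and reassembling the $b$\nb-factors into $\Delta_{1,2}$), together with Lemma~\ref{lemma:delta-on-composite-is-well-def} for the well-definedness of the adjoining function. Nothing in your argument deviates from or adds to that.
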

\begin{proof}
  Clear from the discussion above.
\end{proof}
\begin{definition}[Composition]
\label{def:composition}
Let
\begin{align*}
  (X, \alpha)&\colon (G_1,\chi_1)\allowbreak \rightarrow\allowbreak
                        (G_2,\chi_2) \textup{ and }\\
  (Y, \beta)&\colon (G_2,\chi_2) \rightarrow (G_3,\chi_3)
\end{align*}
be topological correspondences with $\Delta_1$ and $\Delta_2$ as the adjoining functions, respectively. A composite of these correspondence $(\Omega, \mu):(G_1,\chi_1)\rightarrow (G_3,\chi_3)$ is defined by:
\begin{enumerate}[label= {\roman*)}, leftmargin=*]
   \item the space $\Omega \defeq (X\times_{\base[G_2]}Y)/ G_2$;
   \item a family of measures $\mu = \{ \mu_u\}_{u\in{\base[G]}_3}$ such that
     \begin{enumerate}[leftmargin=*]
     \item let $\Delta\in \CC^1_{G_3}((X\times_{\base[G_2]}Y)\rtimes G_2, \R^+_*)$ be the 1\nb-cocycle $\Delta((x,y),\gamma)=\Delta_2(\gamma\inverse, y)$,
     \item let $b\in \CC^0_{G_3}((X\times_{\base[G_2]}Y)\rtimes G_2, \R^+_*)$ be a cochain with $d^0(b)=\Delta$;
     \item then $\mu$ disintegrate the family of measures $\{b(\alpha \times \beta_u)\}_{u\in{\base[G_3]}}$ on $X\times_{\base[G_2]}Y$ along the quotient map $\pi\colon X\times_{\base[G_2]}Y\to \Omega$ using $\lambda$, that is, $b(\alpha \times \beta_u)=\mu_u\circ\lambda$ for each $u\in \base[G_3]$.
     \end{enumerate}
\end{enumerate}
\end{definition}

\noindent  In Definition~\ref{def:composition}, $\CC^n_{G_3}((X\times_{\base[G_2]}Y)\rtimes G_2, \R^+_*)$ denotes the $n$\nb-th cochain group consisting of $G_3$\nb-invariant $\R^+_*$\nb-valued continuous cochains on groupoid $(X\times_{\base[G_2]}Y)\rtimes G_2$. For the composite $(\Omega,\mu)$ as above, the adjoining function $\Delta_{1,2}$ is given by Equation~\eqref{eq:def-delta}.

\begin{theorem}\label{thm:well-behaviour-of-composition}
Let $(X, \alpha)\co (G_1,\chi_1) \rightarrow (G_2,\chi_2)$ and $(Y, \beta)$ $\co (G_2,\chi_2) \rightarrow (G_3,\chi_3)$ be topological correspondences of locally compact groupoids with Haar systems. In addition, assume that $X$ and $Y$ are Hausdorff and second countable. Let $(\Omega, \mu)\co (G_1,\chi_1)\rightarrow (G_3,\chi_3)$ be a composite of the correspondences. Then $\Hils(\Omega) $ and $\Hils(X)\, \hat{\otimes}_{\Cst(G_2,\chi_2)} \Hils(Y)$ are isomorphic
$\Cst$\nb-correspondences from $\Cst(G_1,\chi_1)$ to $\Cst(G_3,\chi_3)$.
\end{theorem}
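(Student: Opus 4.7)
The plan is to construct an explicit unitary
\[
U\co \Hils(X)\hat\otimes_{\Cst(G_2,\chi_2)}\Hils(Y)\to\Hils(\Omega)
\]
from a suitably weighted pushforward on the level of compactly supported functions, and then to verify that it intertwines the left $\Cst(G_1,\chi_1)$-representations. The weight is the cochain $b$ of Definition~\ref{def:composition}; its presence is forced by the fact that the family of measures on $\Omega$ disintegrates not $m$ but $bm$.

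First I would define a map $\Psi\co\Contc(X)\odot_{\C}\Contc(Y)\to\Contc(\Omega)$ by linear extension of
\[
\Psi(f\otimes g)([x,y])\defeq \int_{G_2^{s_X(x)}} f(x\gamma)\,g(\gamma\inverse y)\,b(x\gamma,\gamma\inverse y)^{1/2}\,b(x,y)^{-1/2}\,\dd\chi_2^{s_X(x)}(\gamma).
\]
Properness of the $G_2$-action on $Z$ guarantees that the integrand has compact support in $\gamma$, and the $G_2$-invariance of the integrated expression (combined with the $G_2$-invariance of $b$ from Remark~\ref{rem:invariance-of-b}) shows that $\Psi(f\otimes g)$ descends to a well-defined element of $\Contc(\Omega)$. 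Continuity of $\Psi(f\otimes g)$ follows from the continuity of $\chi_2$, $b$, and the usual Stone--Weierstra\ss\ type density argument as in Lemma~\ref{lemma:m_G_3_family}.

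The main computation is to show that $\Psi$ preserves the $\Cst(G_3,\chi_3)$-valued inner products. Starting from
\[
\inpro{\Psi(f_1\otimes g_1)}{\Psi(f_2\otimes g_2)}(\eta)=\int_{\Omega_{r_{G_3}(\eta)}}\overline{\Psi(f_1\otimes g_1)([x,y])}\,\Psi(f_2\otimes g_2)([x,y]\eta)\,\dd\mu_{r_{G_3}(\eta)}[x,y],
\]
I would expand both $\Psi$'s, transfer the outer integration to $Z$ via the defining relation $bm_{u}=\mu_u\circ\lambda$, and apply Fubini and the change of variables $(\gamma_1,\gamma_2)\mapsto(\gamma_1,\gamma_1\gamma_2)$ to collapse one of the two $G_2$-integrals. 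The resulting integrand contains the ratio $b(x\gamma_2,\gamma_2\inverse y)/b(x,y)$, which by Lemma~\ref{lemma:existance_of_composed_measure} equals $\Delta_2(\gamma_2\inverse,y)$; this is precisely the modular factor needed to reproduce the left $\Cst(G_2,\chi_2)$-action on $\Contc(Y)$ given by~\eqref{def:left-right-action}. Consequently the whole expression collapses to $\inpro{g_1}{\inpro{f_1}{f_2}\cdot g_2}(\eta)$, which is the $\Cst(G_3,\chi_3)$-valued inner product of $f_1\hat\otimes g_1$ and $f_2\hat\otimes g_2$ in $\Hils(X)\hat\otimes_{\Cst(G_2,\chi_2)}\Hils(Y)$. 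As an immediate by-product, $\Psi$ annihilates the balancing subspace $N$ spanned by $f\phi\otimes g-f\otimes\phi g$ for $\phi\in\Contc(G_2)$, since this subspace is the null space of the inner product on $\Hils\otimes_{\C}\Hils[F]$ (see the discussion on page~\pageref{page:compo-of-hilm}). Hence $\Psi$ descends to an isometry on the algebraic balanced tensor product and extends by continuity to an isometry $U$ between the Hilbert-module completions.

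To finish, I would check density of the image of $\Psi$ and equivariance for the left $\Cst(G_1,\chi_1)$-action. Density reduces to showing that finite sums of the form $\Psi(f\otimes g)$ are uniformly dense in $\Contc(\Omega)$ in the inductive-limit topology; this follows from the surjectivity of the integration map $\Lambda\co\Contc(Z)\to\Contc(\Omega)$ together with Stone--Weierstra\ss\ for $\{f\otimes g\}\subseteq\Contc(Z)$, since a partition-of-unity argument absorbs the strictly positive continuous weight $b^{1/2}$. Equivariance $U(\phi\cdot(f\hat\otimes g))=\phi\cdot U(f\otimes g)$ for $\phi\in\Contc(G_1)$ is a parallel unwinding using the $G_1$-invariance of $b$ (Remark~\ref{rem:invariance-of-b}) and the definition of $\Delta_{1,2}$ from Equation~\eqref{eq:def-delta}, together with Lemma~\ref{lemma:delta-on-composite-is-well-def}. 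The main obstacle, and where the construction earns its weight, is the central inner-product identity: it is precisely the cocycle relation $b\circ s_{Z\rtimes G_2}/b\circ r_{Z\rtimes G_2}=\Delta$ of Lemma~\ref{lemma:existance_of_composed_measure} that converts the $C^*(G_2,\chi_2)$-balancing on the left-hand side into the pushforward under $\lambda$ on the right-hand side, so any serious bookkeeping error in the half-powers of $b$ and in the $\Delta_i^{1/2}$ factors from the respective inner products would break the argument.
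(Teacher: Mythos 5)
Your overall strategy coincides with the paper's: define a weighted pushforward $\Contc(X)\otimes_{\C}\Contc(Y)\to\Contc(\Omega)$, show it preserves the $\Cst(G_3,\chi_3)$\nb-valued inner products (which automatically annihilates the balancing subspace $N$, since $N$ is the null space of the inner product), check density of the image, and verify the intertwining of the left $\Contc(G_1)$\nb-actions via $\Delta_{1,2}$. However, your defining formula for $\Psi$ is not well defined on $\Omega$, and the step you invoke to justify it is false.

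The weight you insert is $b(x\gamma,\gamma\inverse y)^{1/2}b(x,y)^{-1/2}=\Delta((x,y),\gamma)^{1/2}=\Delta_2(\gamma\inverse,y)^{1/2}$. With this weight the integral depends on the chosen representative of $[x,y]$: replacing $(x,y)$ by $(x\eta,\eta\inverse y)$ and substituting $\gamma\mapsto\eta\inverse\gamma$ (left invariance of $\chi_2$) leaves the arguments of $f$ and $g$ unchanged but turns the weight into $\Delta_2(\gamma\inverse\eta,\eta\inverse y)^{1/2}=\Delta_2(\gamma\inverse,y)^{1/2}\,\Delta_2(\eta,\eta\inverse y)^{1/2}$, so the whole integral picks up the factor $\Delta_2(\eta,\eta\inverse y)^{1/2}\neq 1$ in general. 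You justify well-definedness by ``the $G_2$-invariance of $b$ from Remark~\ref{rem:invariance-of-b}'', but that remark only gives $G_1$\nb- and $G_3$\nb-invariance; $b$ \emph{cannot} be $G_2$\nb-invariant, since its defining property (Lemma~\ref{lemma:existance_of_composed_measure}) is precisely $b(x\gamma,\gamma\inverse y)=\Delta_2(\gamma\inverse,y)\,b(x,y)$. The correct map uses the weight $b^{-1/2}$ evaluated at the integration variable alone, namely $\Lambda'(f\otimes g)\defeq\Lambda\bigl((f\otimes g)|_Z\,b^{-1/2}\bigr)$, which is manifestly a function on $\Omega$ because $\Lambda(F)$ is for every $F\in\Contc(Z)$. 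With that correction the rest of your outline --- transferring the integral to $Z$ via $bm_u=\mu_u\circ\lambda$, extracting $\Delta_2^{1/2}$ from the ratio of the $b$'s using the cocycle relation together with the $G_3$\nb-invariance of $b$, and the density and equivariance arguments --- goes through essentially as in the paper.
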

\begin{proof}
 The symbols $Z$, $\Omega$ and the families of measures $m,\lambda$ and $\mu$ continue to have the same meaning as in the earlier discussion. Let $b$ be a fixed zeroth cocycle on $Z\rtimes G_2$ with $\Delta=d^0(b)$ as in Definition~\ref{def:composition}. In the calculations below, the subscripts to $\inpro{}{}$ indicate the Hilbert module on which the inner product is defined. We write $\Hils(X)\hat\otimes \Hils(Y)$ instead of $\Hils(X)\hat{\otimes}_{\Cst(G_2,\chi_2)} \Hils(Y)$ in this proof to reduce the complexity in writing.

Recall the process of composing two $\Cst$\nb-correspondences in Section~\ref{rev} on page~\pageref{page:compo-of-hilm}. We know that $\Contc(X)\subseteq\Hils(X)$ and $ \Contc(Y)\subseteq\Hils(Y)$ are, respectively, pre-Hilbert  $\Cst(G_2,\chi_2)$ and$\Cst(G_3,\chi_3)$\nb-modules. Due to this density, the image of $\Contc(X)\mathbin{\otimes_{C}}\Contc(Y)\to \Hils(X)\mathbin{\hat\otimes}\Hils(Y)$ under the obvious map is dense. Recall from the same discussion that the obvious map sends $f\otimes g\in \Contc(X)\otimes_{\C}\Contc(Y)$ to its equivalence class in $(\Contc(X)\otimes_{\C}\Contc(Y))/(\Contc(X)\otimes_{\C}\Contc(Y))\cap N$ where $N\subseteq \Hils(X)\mathbin{\hat\otimes}\Hils(Y)$ is the subspace of vectors of zero norm. The norm on $\Contc(X)\otimes_{\C} \Contc(Y)$ induced by the inner product
\begin{equation}
  \label{eq:inn-pro-on-cst-comp}
\inpro{f\otimes g}{f'\otimes g'}_{\Hils(X)\mathbin{\hat\otimes}\Hils(Y)}\defeq \inpro{g}{\inpro{f}{f'}_{\Hils(X)}g'}_{\Hils(Y)}
\end{equation}
for $f\otimes g, f'\otimes g'\in\in \Contc(X)\otimes_{\C} \Contc(Y)$.

Thus when equipped with the inner product in Equation~\ref{eq:inn-pro-on-cst-comp}, the pre-Hilbert $\Cst(G_3,\chi_3)$\nb-module $\Contc(X)\otimes_{\C}\Contc(Y)$ completes to the Hilbert $\Cst(G_3,\chi_3)$\nb-module $\Hils(X)\mathbin{\hat\otimes}\Hils(Y)$.

On the other hand, $\Contc(\Omega)\subseteq\Hils(\Omega)$ is a pre-Hilbert $\Cst(G_3,\chi_3)$\nb-module. We define an inner product preserving $\Contc(G_3)$\nb-module map $\Lambda'\colon  \Contc(X)\otimes \Contc(Y)\to \Contc(\Omega)$ which has a dense image. Then $\Lambda'$ is an inner product preserving map of pre-Hilbert $\Cst(G_3,\chi_3)$\nb-modules and image of $\Lambda'$ is dense. Thus $\Lambda'$ extends to an isomorphism of Hilbert $\Cst(G_3,\chi_3)$\nb-modules $\Hils(X)\hat\otimes\Hils(Y)\to \Hils(\Omega)$.

 After showing that $\Lambda'$ is an isometry of $\Contc(G_3)$\nb-modules and has a dense image, we show that $\Lambda'$ intertwines the representations of $\Cst(G_1,\chi_1)$ on $\Hils(X)\hat\otimes\Hils(Y)$ and $\Hils(\Omega)$ which completes the proof that $\Lambda'$ induces an isomorphism  $\Hils(X)\hat\otimes\Hils(Y)\to \Hils(\Omega)$ of $\Cst$\nb-correspondences. 

Again, due to the density arguments as above, it is enough to show that $\Lambda'$ intertwines the representations of $\Contc(G_1)$ on $\Contc(X)\otimes_{\C} \Contc(Y)$ and $\Contc(\Omega)$; and this is what we show. Thus the proof is divided into two parts, the first part proves the isomorphism of Hilbert modules and the other the isomorphism of the representations.

The strategy of the proof is explained and we start the proof by defining $\Lambda'$. Map $f\otimes g\in \Contc(X)\otimes_{\C} \Contc(Y)$ to $(f\otimes g)|_Z\in \Contc(Z)$ where $(f\otimes g)|_Z(x,y)=f(x)g(y)$ for $(x,y)\in Z$. Then the Stone-Weierstra{\ss} theorem gives that the set $\{(f\otimes g)\lvert_{Z}: f\otimes g\in \Contc(X)\otimes_{\C} \Contc(Y)\}\subseteq \Contc(Z)$ is dense. Define $\Lambda': \Contc(X)\otimes_{\C} \Contc(Y) \rightarrow \Contc(\Omega)$ by 
\begin{align*}
\Lambda'(f\otimes g)[x,y]
 &=\Lambda((f\otimes g)\lvert_{Z}b^{-1/2})[x,y]\\
 &= \int_{G_2} (f\otimes g)|_Z(x\gamma, \gamma\inverse y)  \,b^{-1/2}(x\gamma,\gamma\inverse y) \,\dd\chi_2^{s_X(x)}(\gamma)  
\end{align*}
 where $f\otimes g \in \Contc(X)\otimes_{\C} \Contc(Y)$.
Since $b$ is a positive function, the multiplication by $b^{-1/2}$ is an isomorphism from $\Contc(Z)$ to itself. As $\lambda$ is a continuous family of measure with full support, $\Lambda\colon \Contc(Z)\to \Contc(\Omega)$ is surjection. Thus the composite $\Lambda'\co \Contc(X)\otimes_{\C} \Contc(Y)\xrightarrow{f\otimes g\mapsto(f\otimes g)\lvert_{Z}} \Contc(Z) \xrightarrow{\textnormal{multiplication by }b^{-1/2}}  \Contc(Z) \xrightarrow{\Lambda} \Contc(\Omega)$ is a continuous and has dense image.

Let $z\in\C, f,f'\in \Contc(X)$ and $g,g'\in \Contc(Y)$. Then it is straightforward computation to check that $\Lambda'(z f\otimes g+f'\otimes g')=z\Lambda'(f\otimes g)+\Lambda'(f'\otimes g')$. Furthermore, if $\psi\in \Contc(G_3)$, then a computation using Fubini's theorem shows that $\Lambda'((f\otimes g)\psi)=\Lambda'(f\otimes g)\psi$. Thus $\Lambda'$ is a homomorphism of $\Contc(G_3)$\nb-modules.

\paragraph{{\bfseries The isomorphism of the Hilbert modules:}}
In this part, we show that $\Lambda'$ preserves $\Cst(G_3,\chi_3)$\nb-valued inner products. Let $f\otimes g \in \Contc(X)\otimes_{\C} \Contc(Y)$ and $\underline{\gamma} \in G_3$, then
\begin{align*}
&\inpro{f\otimes g}{f \otimes g}_{\Hils(X)\mathbb{\hat\otimes}\Hils(Y)} (\underline{\gamma})\defeq \inpro{g}{\inpro{f}{f}_{\Hils(X)} g}_{\Hils(Y)}(\underline{\gamma}) \notag\\
&= \int_Y \overline{g(y)}\;\; (\inpro{f}{f}_{\Hils(X)} \; g)(y\underline{\gamma}) 
\;\; \dd\beta_{r_{G_3}(\underline{\gamma})}(y)\notag\\
&=\int_Y  \int_{G_2}\overline{g(y)}\; \langle f, f \rangle_{\Hils(X)}(\gamma)\; g(\gamma\inverse y\underline{\gamma})\notag
\Delta^{1/2}_2(\gamma, \gamma\inverse y\underline{\gamma}) \;\dd\chi_2^{r_Y(y)}(\gamma)\dd\beta_{r_{G_3}(\underline{\gamma})}(y)\notag\\
&=\int_Y\int_{G_2} \overline{g(y)}\left( \int_X \overline{f(x)} f (x\gamma) \,\dd\alpha_{r_{G_2}(\gamma)}(x) \right)\notag\\
&\qquad\quad g(\gamma\inverse y\underline{\gamma})\;\Delta^{1/2}_2(\gamma, \gamma\inverse y\underline{\gamma}) \;\dd\chi_2^{r_Y(y)}(\gamma) \;\dd\beta_{r_{G_3}(\underline{\gamma})}(y) \notag
\end{align*}
We rearrange the functions, note that $r_{G_2}(\gamma)=r_Y(y)$ and write the last term as
\begin{multline}
\label{eq:left-inn-pro-value}
\int_Y\int_{G_2}\int_X \overline{f(x)}\, \overline{g(y)} \; f (x\gamma) g(\gamma\inverse y\underline{\gamma})\\
\Delta^{1/2}_2(\gamma, \gamma\inverse y\underline{\gamma}) \;\dd\alpha_{r_Y(y)}(x) \;\dd\chi_2^{r_Y(y)}(\gamma) \;\dd\beta_{r_{G_3}(\underline{\gamma})}(y). 
\end{multline}

Now we calculate the norm of $\Lambda'(f\otimes g)\in \Contc(\Omega)$:
\begin{multline*}
\inpro{\Lambda'(f\otimes g)}{\Lambda'(f \otimes g)}_{\Hils(\Omega)}(\underline{\gamma})\\
\defeq \int_\Omega \overline{\Lambda'(f\otimes g)[x,y]}\; \Lambda'(f\otimes g)[x,y\underline{\gamma}] \; \dd\mu_{r_{G_3}(\underline{\gamma})}[x,y].
\end{multline*}
We plug the value of the first $\Lambda'(f\otimes g)$ and continue computing further:
\begin{align*}
  &\int_\Omega  \left(\int_{G_2}\overline{f(x\gamma_*)} \overline{g(\gamma_*\inverse y)} b^{-1/2}(x\gamma_*, \gamma_*\inverse y)\;\dd\chi_2^{r_Y(y)}(\gamma_*)\right)
\Lambda'(f\otimes g)[x,y\underline{\gamma}] \, \dd\mu_{r_{G_3}(\underline{\gamma})}[x,y] \\
&= \int_\Omega\int_{G_2} \overline{f(x\gamma_*)} \overline{g(\gamma_*\inverse y)} b^{-1/2}(x\gamma_*, \gamma_*\inverse y) 
\Lambda'(f\otimes g)[x,y\underline{\gamma}]\,
\dd\chi_2^{r_Y(y)}(\gamma_*)\; \dd\mu_{r_{G_3}(\underline{\gamma})}[x,y]\\
&=  \int_Y\int_X \overline{f(x)}\,\overline{g(y)} b^{-1/2}(x,y) \Lambda'(f\otimes g)[x,y\underline{\gamma}]b(x,y)\,
\dd\alpha_{r_Y(y)}(x) \;\dd\beta_{r_{G_3}(\underline{\gamma})}(y).
\end{align*}
The last equality above is due to Remark~\ref{rem:formula-for-mu-u}, which says that
 \[\dd\chi_2^{r_Y(y)}(\gamma_*) \;\dd\mu_{r_{G_3}(\underline{\gamma})}[x,y] = b(x, y) \; \dd\alpha_{r_Y(y)}(x) \;\dd\beta_{r_{G_3}(\underline{\gamma})}(y).\]
We process the function $b$ in the previous term,  plug in the value of $\Lambda'(f\otimes g)$ and 
compute further,
\begin{align*}
\text{L.\,H.\,S.}&=\int_Y\int_{X} \overline{f(x)}\,\overline{g(y)} 
\left(\int_{G_2} f(x\gamma) g(\gamma\inverse y\underline{\gamma})b^{-1/2}(x\gamma,\gamma\inverse y\underline{\gamma})\;\dd\chi_2^{r_Y(y)}(\gamma)\right)\\
            &\qquad b^{1/2}(x,y) \;
\dd\alpha_{r_Y(y)}(x) \;\dd\beta_{r_{G_3}(\underline{\gamma})}(y) \notag\\
&= \int_Y\int_{X} \int_{G_2} \overline{f(x)}\, \overline{g(y)} f(x\gamma)
  g(\gamma\inverse y\underline{\gamma})\notag\\
            &\qquad \left(\frac{b(x,y)}{b(x\gamma,\gamma\inverse y\underline{\gamma})} \right)^{1/2} \dd\chi_2^{r_Y(y)}(\gamma)\;\dd\alpha_{r_Y(y)}(x)\;\dd\beta_{r_{G_3}(\underline{\gamma})}(y).\notag
\end{align*}
First we use the $G_3$\nb-invariance of $b$ (Remark~\ref{rem:invariance-of-b}) to write $b(x,y)=b(x,y\underline{\gamma})$. Then we use Lemma~\ref{prop:weak_equ_implies_equ} to relate the factors of $b$ and get a factor of $\Delta$ which can be written in terms of $\Delta_2$ using Remark~\ref{rem:invariance-of-Delta}. At the end of these computations, the last term of the previous becomes
\begin{multline*}
\int_Y\int_{X} \int_{G_2} \left(\overline{f(x)}\, \overline{g(y)} f(x\gamma) g(\gamma\inverse y\underline{\gamma})\right)\\ {\Delta_2}^{1/2}(\gamma, \gamma\inverse y\underline{\gamma})\;\dd\chi_2^{r_Y(y)}(\gamma)\,\dd\alpha_{r_Y(y)}(x)\,\dd\beta_{r_{G_3}(\underline{\gamma})}(y).
\end{multline*}
Finally, we apply Fubini's Theorem to $\chi_2^{r_Y(y)}$ and $\alpha_{r_Y(y)}$ to get
\begin{multline}\label{eq:right-inn-pro-value}
\inpro{\Lambda'(f\otimes g)}{\Lambda'(f \times g)}_{\Hils(\Omega)}(\underline{\gamma})
\\= \int_Y\int_{G_2}\int_{X} \left(\overline{f(x)}\, \overline{g(y)} f(x\gamma) g(\gamma\inverse y\underline{\gamma})\right)\\{\Delta_2}^{1/2}(\gamma, \gamma\inverse y\underline{\gamma}) \;\dd\alpha_{r_Y(y)}(x)\;\dd\chi_2^{r_Y(y)}(\gamma) \;\dd\beta_{r_{G_3}(\underline{\gamma})}(y).
\end{multline}

Comparing the values of both inner products, that is, Equation~\ref{eq:left-inn-pro-value} and~\ref{eq:right-inn-pro-value}, we conclude that
\begin{equation*}
\inpro{f\otimes g}{f \otimes g}_{\Hils(X)\mathbin{\hat{\otimes}} \Hils(Y)} = \inpro{\Lambda'(f\otimes g)}{\Lambda'(f \otimes g)}_{\Hils(\Omega)}.
\end{equation*}
\paragraph{{\bfseries The isomorphism of representations:}}

Denote the actions of $\Cst(G_1,\chi_1)$ on $\Hils(X)\,
\hat{\otimes} \Hils(Y)$ and $\Hils(\Omega)$ by $\rho_1$ and
$\rho_2$, respectively, that is, $\rho_1\co \Cst(G_1,\chi_1)\to
\Bound(\Hils(X)\, \hat{\otimes}_{\Cst(G_2)} \Hils(Y))$ and $\rho_2\co
\Cst(G_1,\chi_1)\to \Bound( \Hils(\Omega))$ are the nondegenerate \Star{}representations that give the
$\Cst$\nb-correspondences from $\Cst(G_1,\chi_1)$ to
$\Cst(G_3,\chi_3)$. Now we show that $\Lambda'$ intertwines $\rho_1$ and $\rho_2$.

Let $\Delta_{1,2}$ be the adjoining function of $(\Omega,\mu)$ which is given by Equation~\ref{eq:def-delta}. Let $\phi\in \Contc(G_1)$ and $f\otimes g\in \Contc(X)\otimes_{\C} \Contc(Y)$, then
\begin{align}
&(\rho_2(\phi)\Lambda')(f\otimes g)[x,y]
  = (\phi*\Lambda'(f\otimes g))[x,y]\notag\\
&=\int_{G_1} \phi(\eta) \Lambda'(f\otimes g))[\eta\inverse x, y]\, \Delta_{1,2}^{1/2}(\eta,[\eta\inverse x, y])\,\dd\chi_1^{r_X(x)}(\eta)\notag\\
&=\int_{G_1}\int_{G_2} \phi(\eta) f(\eta\inverse x\gamma) g(\gamma\inverse y)\,b^{-1/2}(\eta\inverse x\gamma,\gamma\inverse y)\label{eq:iso-of-repre-composite-corr}\\
&\qquad\Delta_{1,2}^{1/2}(\eta,[\eta\inverse x, y])\ \,\dd\chi_2^{s_X(x)}(\gamma)\,\dd\chi_1^{r_X(x)}(\eta). \notag
\end{align}
Lemma~\ref{lemma:delta-on-composite-is-well-def} and Equation~\eqref{eq:def-delta} allows us to write
 \[
\Delta_{1,2}(\eta, [\eta\inverse x,y])=\Delta_{1,2}(\eta,[\eta\inverse x\gamma,\gamma\inverse y])=\Delta_1(\eta,\eta\inverse x\gamma)\,\frac{b(\eta\inverse x\gamma,\gamma\inverse y)}{b(x\gamma,\gamma\inverse y)}.
\] 
Substitute this value of $\Delta_{1,2}(\eta, [\eta\inverse x,y])$ in Equation~\ref{eq:iso-of-repre-composite-corr}. Then apply Fubini's theorem and continuing computing further: 
\begin{align*}
  &\int_{G_2}\left(\int_{G_1} \phi(\eta) f(\eta\inverse x\gamma) \,\Delta_1^{1/2}(\eta,\eta\inverse x\gamma)\,\dd\chi_1^{r_X(x)}(\eta)\right)\\
&\qquad g(\gamma\inverse y)\,\,b^{-1/2}( x\gamma,\gamma\inverse y)\,\dd\chi_2^{s_X(x)}(\gamma)\\
&=\int_{G_2} (\phi *f)(x\gamma) g(\gamma\inverse y)\,b^{-1/2}( x\gamma,\gamma\inverse y) \,\dd\chi_2^{s_X(x)}(\gamma)\\
&=\Lambda'((\phi*f)\otimes g)[x,y]=\Lambda'(\rho_1(\phi)(f\otimes g))[x,y].\qedhere
\end{align*}
\end{proof}

\section{Examples}
\begin{example}
  \label{exa:cont-function-as-corr-2}
Let $X, Y$ and $Z$ be locally compact Hausdorff spaces and let $f\co X\to Y$ and $g\colon Y\to Z$ be a continuous functions. Then~\cite{Holkar2015Construction-of-Corr}*{Example 3.1} shows that $(X,\delta_X)$ is a topological correspondence from $Y$ to~$X$ and $(Y,\delta_Y)$ is the one from $Y$ to~$Z$. Here $\delta_X=\{\delta_x\}_{x\in X}$ is the family of measures consisting of point masses along the identity map $X\to X$. Similar is the meaning of $\delta_Y$. The constant function $1$ is the adjoining function for both correspondences.

The space involved the composite of $(Y,\delta_Y)$ and $(X,\delta_X)$ is $(Y\times_{\Id_Y, Y, f} X)\homeo X$, and the homeomorphism $(Y\times_{\Id_Y, Y, f} X)\to X$ is implemented by the function $(f(x),x)\mapsto x$. The inverse of this function is $x\mapsto (f(x),x)$. The left momentum map $Y\times_{\Id_Y, Y, f}X\to Z$ is $(f(x),x)\mapsto g(f(x))$ which we identify with $g\circ f\colon X\to Z$. Thus the composite of the topological correspondences related to continuous maps is same as the topological correspondence related to the composite of the maps. Reader may check the $\Cst$\nb-algebraic counterpart of this example agree with Theorem~\ref{thm:well-behaviour-of-composition}.
\end{example}
\begin{example}
  \label{exm:topological-quiver-top-corr-2}
Let $V,W, X, Y$ and $Z$ be locally compact Hausdorff spaces and let $f\colon X\to Z, g\colon X\to Y, k\colon V\to Y$ and $l\colon V\to W$ be continuous maps. Let $\lambda_1$ and $\lambda_2$ be continuous families of measures along $g$ and $l$, respectively (See Figure~\ref{fig:example-quiver} on page~\pageref{fig:example-quiver}). Then $(X,\lambda_1)$ is a topological correspondence from $Z$ to $Y$ and $(V,\lambda_2)$ is one from $Y$ to $W$ (\cite{Holkar2015Construction-of-Corr}*{Example 3.3}). 
\begin{figure}[htb]
  \centering
\[
\begin{tikzcd}[column sep=small]
 & X \arrow{dl}[swap]{f}\arrow{dr}{g}[swap]{\lambda_1}&  &V \arrow{dl}[swap]{k}\arrow{dr}{l}[swap]{\lambda_2}\\
  Z &                        & Y  &                & W
\end{tikzcd}
\]  
\caption{}
\label{fig:example-quiver}
\end{figure}
The composite correspondence is $(X\times_{g,Y,k}V, \lambda_1\circ\lambda_2)$ where $(\lambda_1\circ\lambda_2)_w$ is defined by 
\[
\int_{X\times_{g,Y,k}V} f\,\dd(\lambda_1\circ\lambda_2)_w=\int_V\int_X f(x,v)\,\dd{\lambda_1}_{k(v)}(x)\,\dd{\lambda_2}_{w}(v)
\] for $w\in W$ and $f\in \Contc(X\times_{g,Y,k}V)$. Note that in this example $\lambda_1\circ\lambda_2$ is the family of measures $m$  in Lemma~\ref{lemma:m_G_3_family} and, since there are only the trivial actions, it is same as the family of measures $\mu$ in Proposition~\ref{prop:mu_u_is_measure}.
\end{example}
\begin{example}
  \label{exa:gp-homo-as-corr}
Let $G,H$ and $K$ be locally compact groups,  $\psi\colon K\to H$ and $\phi\co H\to G$ continuous homomorphisms. Let $\alpha, \beta$ and $\lambda$ be the Haar measures on $G, H$ and $K$, respectively. Then $(G,\alpha\inverse)$ is a correspondence from $(H,\beta)$ to $(G,\alpha)$, and $(H,\beta\inverse)$ is one from $(K,\lambda)$ to $(H,\beta)$ (\cite{Holkar2015Construction-of-Corr}*{Example 3.4}). Let $\delta_G, \delta_H$ and $\delta_K$ denote the modular functions of $G,H$ and $K$, respectively. Then $\frac {\delta_G\circ\phi}{\delta_H}$ and $\frac {\delta_H\circ\phi}{\delta_K}$ are the adjoining function for these correspondences, respectively. 

The $K$-$G$\nb-bispace in composite of these correspondences is $(H\times G)/H\homeo G$. The map $a\colon\gamma\mapsto[e_H,\gamma]$, $G\to (H\times G)/H,$ gives the homeomorphism where $e_H$ is the unit in $H$. The inverse of this map $a\inverse$ is $a\inverse\colon[\eta, \gamma]\mapsto \phi(\eta)\gamma$, $(H\times G)/H\to G$.

We figure out the action of $K$ on this $K$\nb-$G$-bispace: if $\kappa\in K$ then $\kappa\gamma= a\inverse(\kappa[e_H,\gamma])\defeq a\inverse([\psi(\kappa),\gamma])=\phi(\psi(\kappa))\gamma$. Thus $K$ acts on $G$ via the homomorphism $\phi\circ\psi\colon K\to G$. Similarly, the right action of $G$ on the composite space $(H\times G)/H\homeo G$ is identified with the right multiplication action of $G$ on itself. A computation as in~\cite{Holkar2015Construction-of-Corr}*{Example 3.4} gives that $\frac {\delta_G\circ\phi\circ\psi}{\delta_K}$ is the adjoining function for the composite correspondence.

This shows that the composite of $(H,\beta\inverse)$ and $(G,\alpha\inverse)$ is same as the correspondence associated with the homomorphism $\phi\circ\psi\colon K\to G$.
\end{example}
\begin{example}\label{ex:subgroup_action-2}
Let $(G,\alpha), (H,\beta)$ and $(K,\lambda)$ be locally compact groups with Haar measures, and let $\phi\colon H\to G$ and $\psi\co K\to G$ be continuous homomorphisms. Assume the $\psi$ is a proper map. Then $\phi$ gives a correspondences $(G,\alpha\inverse)$ from $(H,\beta)$ to $(G,\alpha)$ as in~\cite{Holkar2015Construction-of-Corr}*{Example 3.4} and $\psi$ gives a correspondence $(G,\alpha\inverse)$ from $(G,\alpha)$ to $(K,\lambda)$ as in~\cite{Holkar2015Construction-of-Corr}*{Example 3.5}. The adjoining function of the topological correspondence associated with $\psi$ is the constant function $1$.

The composite of these correspondences is a correspondence $(H,\beta)\to (K,\lambda)$.  One the similar lines of Example~\ref{exa:gp-homo-as-corr}, one may show that the space involved in the composite is homeomorphic to $G$, the actions of $H$ and $K$ are identified with the left and right multiplication via $\phi$ and $\psi$, the $K$\nb-invariant family of measures on $G$ is $\alpha\inverse$. From~\cite{Holkar2015Construction-of-Corr}*{Example 3.4} we know that the $\alpha\inverse$ is $(H,\beta)$\nb-quasi-invariant and the function $\frac{\delta_G\circ\phi}{\delta_H}$ is the cocycle involved in the quasi\nb-invariance. Hence $\frac{\delta_G\circ\phi}{\delta_H}$ is the adjoining function for the composite.

An interesting situation is when $H,K\subseteq G$ are closed subgroups, and $\phi$ and $\psi$ are the inclusion maps. Then the composite correspondence from $(H,\beta)$ to $(K,\lambda)$ is $(G,\alpha\inverse)$ where $G$ is made into an $H$\nb-$K$ bispace using the left and right multiplication actions, respectively. The adjoining function in this case is $\frac{\delta_G}{\delta_H}$. 
\end{example}
\begin{example}\label{exm:Stadler-Ouchi-correspondence-2}
Example~{3.7} in~\cite{Holkar2015Construction-of-Corr} shows that the correspondences defined by Macho Stadler and  O'uchi in~\cite{Stadler-Ouchi1999Gpd-correspondences} are topological correspondences. The same example shows that the topological correspondences defined by Tu in~\cite{Tu2004NonHausdorff-gpd-proper-actions-and-K}*{Proposition 7.5} are also topological correspondences provided that the spaces of the units of the groupoids are Hausdorff. The composition of correspondences of Macho Stadler and O'uchi defined by Tu (\cite{Tu2004NonHausdorff-gpd-proper-actions-and-K}) is same as the composition we define. 

Recall from~\cite{Stadler-Ouchi1999Gpd-correspondences}*{Definition 1.2} or~\cite{Tu2004NonHausdorff-gpd-proper-actions-and-K}*{Definition 7.3} that a correspondence $(G_1,\chi_1)\to (G_2,\chi_2)$ is a $G_1$\nb-$G_2$-bispace, and the actions and the quotient $G_1\7 X$ satisfy certain conditions. Since the correspondences of Macho Stadler and O'uchi or Tu do not involve explicit families of measures, the construction of the composite in this is purely topological. If $Y$ a correspondence in there sense from $(G_2,\chi_2)$ to $(G_3,\chi_3)$, then Tu shows~\cite{Tu2004NonHausdorff-gpd-proper-actions-and-K} the space $\Omega$ in Definition~\ref{def:composition} the composite. 
\end{example}
\begin{example}
\label{exa:Buneci-Stachura-exm}
  Example~{3.10} in~\cite{Holkar2015Construction-of-Corr} shows that the generalized morphisms defined by Buneci and Stachura are topological correspondences in our sense. Though it is not as straightforward as in Example~\ref{exm:Stadler-Ouchi-correspondence-2} above, but it may be checked that the composition of the generalized morphisms of Buneci and Stachura defined in~\cite{Holkar2015Construction-of-Corr}*{Section 2.2} match our definition of composition.
\end{example}
\begin{example}
\label{exa:topological-induction}
Let $G$ be a locally compact group, let $H$ and $K$ be closed subgroups of $G$, and let  $\alpha, \beta$ and $\lambda$ be the Haar measures on $G, H$ and $K$, respectively. Let $\delta_G$ and $\delta_H $ be the modular functions of $G$ and $H$, respectively. Then $\big(G, \alpha^{-1}\big)$ is a correspondence from $H$ to $ K$ with $ \frac{\delta_G}{\delta_H}$ as the adjoining function, see in Example~\ref{ex:subgroup_action-2}.

 Let $X$ be a left $K$-space carrying a strongly $(K,\lambda)$\nb-quasi-invariant measure $\kappa$, that is, $\kappa$ is a $(K,\lambda)$\nb-quasi-invariant measure on $X$ and the Radon-Nikodym derivative for the quasi-invariance, say $\Delta\colon K\ltimes X\to \R^+_*$, is a continuous function. Then $(X, \kappa)$ is a correspondence from $K$ to $\textup{Pt}$ with $\Delta$ as adjoining function. Here $\textup{Pt}$ stands for the trivial group(oid) which consists of the unit only.

We discuss the composite of these two correspondences. The space in the composite is $(G\times X)/K$, which we denote by $Z$. In this example, writing the measure $\nu$ on $Z$ concretely is not always possible. However, when $(X,\kappa)=(K,\lambda)$, we get  $Z\homeo G$ and $\nu=\alpha\inverse$.

The correspondence $(X,\kappa)$ gives a representation of $K$ on $\Ltwo(X,\kappa)$ and the composite correspondence is the representation of $H$ \emph{induced} by this representation of $K$.

\end{example}

\paragraph{\textbf{Acknowledgement:}} I am grateful to Jean Renault and Ralf Meyer for their guidance and many fruitful discussions.

\begin{bibdiv}
  \begin{biblist}
  \bib{Anantharaman-Renault2000Amenable-gpd}{book}{
   author={Anantharaman-Delaroche, C.},
   author={Renault, J.},
   title={Amenable groupoids},
   series={Monographies de L'Enseignement Math\'ematique [Monographs of
   L'Enseignement Math\'ematique]},
   volume={36},
   note={With a foreword by Georges Skandalis and Appendix B by E. Germain},
   publisher={L'Enseignement Math\'ematique, Geneva},
   date={2000},
   pages={196},
   isbn={2-940264-01-5},
   review={\MR{1799683 (2001m:22005)}},
}

\bib{Bourbaki2004Integration-II-EN}{book}{
   author={Bourbaki, Nicolas},
   title={Integration. II. Chapters 7--9},
   series={Elements of Mathematics (Berlin)},
   note={Translated from the 1963 and 1969 French originals by Sterling K.
   Berberian},
   publisher={Springer-Verlag, Berlin},
   date={2004},
   pages={viii+326},
   isbn={3-540-20585-3},
   review={\MR{2098271 (2005f:28001)}},
}

\bib{Buneci-Stachura2005Morphisms-of-lc-gpd}{article}{
  author={Buneci, M\u{a}d\u{a}lina Roxana},
  author={Stachura, Piotr},
  title={Morphisms of locally compact groupoids endowed with Haar systems},
  status={eprint},
  date={2005},
  eprint={arxiv:0511613},
}

\bib{Folland1995Harmonic-analysis-book}{book}{
   author={Folland, Gerald B.},
   title={A course in abstract harmonic analysis},
   series={Studies in Advanced Mathematics},
   publisher={CRC Press, Boca Raton, FL},
   date={1995},
   pages={x+276},
   isbn={0-8493-8490-7},
   review={\MR{1397028 (98c:43001)}},
}

\bib{Holkar2015Construction-of-Corr}{article}{
  title={Topological construction of $C^*$-correspondences
    for groupoid $C^*$-algebras},
  author={Holkar, Rohit Dilip},
  date={2015},
  archivePrefix={arXiv},
  eprint={arxiv:1510.07534},
  primaryClass={math.OA math.FA},
}

\bib{Lance1995Hilbert-modules}{book}{
   author={Lance, E. C.},
   title={Hilbert $C^*$-modules},
   series={London Mathematical Society Lecture Note Series},
   volume={210},
   note={A toolkit for operator algebraists},
   publisher={Cambridge University Press, Cambridge},
   date={1995},
   pages={x+130},
   isbn={0-521-47910-X},
   review={\MR{1325694 (96k:46100)}},
   doi={10.1017/CBO9780511526206},
}

\bib{Stadler-Ouchi1999Gpd-correspondences}{article}{
  author={Macho Stadler, Marta},
  author={O'uchi, Moto},
  title={Correspondence of groupoid $C^*$-algebras},
  journal={J. Operator Theory},
  volume={42},
  date={1999},
  number={1},
  pages={103--119},
  issn={0379-4024},
  eprint={www.mathjournals.org/jot/1999-042-001/1999-042-001-005.pdf},
}

\bib{Tu2004NonHausdorff-gpd-proper-actions-and-K}{article}{
   author={Tu, Jean-Louis},
   title={Non-Hausdorff groupoids, proper actions and $K$-theory},
   journal={Doc. Math.},
   volume={9},
   date={2004},
   pages={565--597 (electronic)},
   issn={1431-0635},
   review={\MR{2117427 (2005h:22004)}},
}
  \end{biblist}
\end{bibdiv}
\end{document}